\newtheorem{theorem}{Theorem}
\newtheorem{lemma}{Lemma}
\newtheorem{remark}{Remark}
\newcommand{\cas}{\overset{a.s.}{\rightarrow}} %
\newcommand{\cw}{\overset{d}{\rightarrow}}
\newcommand{\bbet}{\boldsymbol{\beta}} %
\newcommand{\icol}[1]{% inline column vector
  \left(\begin{smallmatrix}#1\end{smallmatrix}\right)%
}
\begin{document}

\begin{frontmatter}
\title{Robust and sparse estimators for linear regression models}

\runtitle{Robust and sparse estimators for linear regression}

\begin{aug}
\author{\fnms{Ezequiel} \snm{Smucler}\thanksref{t1}\ead[label=e1]{esmucler@ic.fcen.uba.ar}} and
\author{\fnms{V\'\i{}ctor J.} \snm{Yohai}\thanksref{t2}\ead[label=e2]{vyohai@dm.uba.ar}}
\affiliation{Universidad de Buenos Aires}

\thankstext{t1}{Research supported by a doctoral scholarship from CONICET.}
\thankstext{t2}{Research partially supported by Grants W276 from Universidad of Buenos Aires, PIP’s 112-2008-01-00216 and 112-2011-01- 00339 from CONICET and PICT 2011-0397 from ANPCYT, Argentina.}

\address{
Ezequiel Smucler\\
Instituto de Calculo \\
Universidad de Buenos Aires\\
Ciudad Universitaria, Pabellon 2\\
Buenos Aires 1428\\
Argentina\\
\printead{e1}\\
}

\address{
V\'\i{}ctor J. Yohai\\
Instituto de Calculo \\
Universidad de Buenos Aires\\
Ciudad Universitaria, Pabellon 2\\
Buenos Aires 1428\\
Argentina\\
\printead{e2}\\
}
\end{aug}

\begin{abstract}
Penalized regression estimators are a popular tool for the analysis of sparse and high-dimensional data sets. However, penalized regression estimators defined using an unbounded loss function can be very sensitive to the presence of outlying observations, especially high leverage outliers. Moreover, it can be particularly challenging to detect outliers in high-dimensional data sets. Thus, robust estimators for sparse and high-dimensional linear regression models are in need.
In this paper, we study the robust and asymptotic properties of MM-Bridge and adaptive MM-Bridge estimators: $\ell_q$-penalized MM-estimators of regression and MM-estimators with an adaptive $\ell_t$ penalty. For the case of a fixed number of covariates, we derive the asymptotic distribution of MM-Bridge estimators for all $q>0$. We prove that for $q<1$ MM-Bridge estimators can have the \textit{oracle property} defined in \citet{Fan}. We prove that for all $t\leq 1$ adaptive MM-Bridge estimators can have the \textit{oracle property}. 
The advantages of our proposed estimators are demonstrated through an extensive simulation study and the analysis of a real high-dimensional data set.
\end{abstract}

%\begin{keyword}[class=MSC]
%\kwd{62F35}
%\kwd{62J05}
%\kwd{62J07}
%\end{keyword}

\begin{keyword}[class=MSC]
\kwd[Primary ]{62F35}
\kwd[; secondary ]{62J05,62J07}
\end{keyword}

\begin{keyword}
\kwd{Robust regression}
\kwd{MM-estimators}
\kwd{Bridge estimators}
\kwd{Oracle property}
\end{keyword}

\end{frontmatter}
\section{Introduction}
\label{sec-int}

In this paper, we consider the problem of robust and sparse estimation for
linear regression models. In modern regression analysis, sparse and high-dimensional estimation scenarios where ratio of the number of predictor variables to the number of observations, say $p/n$, is high, but the number of actually relevant predictor variables to the number of observations, say $s/n$, is low, have become increasingly common in areas such as bioinformatics and chemometrics. In this type of regression scenarios, due to the high-dimensional nature of the data, it is difficult to discover outlying observations using simple criteria. Traditional robust regression estimators do not produce sparse models and can have a bad behaviour with regards to robustness and efficiency when $p/n$ is high, see \cite{DCML}. Moreover, they cannot be calculated for $p>n$. Thus, robust regression methods for high-dimensional data are in need.

Modern approaches to estimation in sparse and high-dimensional
linear regression models include penalized least squares estimators, e.g. the LS-Bridge estimator of \cite{Frank} and the LS-SCAD estimator of \cite{Fan}. LS-Bridge estimators are penalized least squares estimators in which the penalization function is proportional to the $\ell_q$ norm with $q>0$. They include as special cases the LS-Lasso of \cite{lasso} ($q=1$) and the LS-Ridge of \cite{Ridge} ($q=2$). The LS-SCAD estimator is a penalized least squares estimator in which the penalization function, the SCAD, is a non-concave function with several interesting theoretical properties.

The theoretical properties of penalized least squares estimators have been extensively studied in the past years. Of special note is the so called \textit{oracle property} defined in \cite{Fan}: An estimator is said to have the \textit{oracle property} if the estimated coefficients corresponding to zero coefficients of the true regression parameter are set to zero with probability tending to one, while at the same time the coefficient corresponding to non-zero coefficients of the true regression parameter are estimated with the same asymptotic efficiency as if we knew the correct model in advance.

\cite{Knight} derive the asymptotic distribution of LS-Bridge estimators in the classical regression scenario of fixed $p$ and prove that for $q<1$ these estimators can have the \textit{oracle property}. They also show that for $q=1$, the LS-Lasso sets the estimated coefficients corresponding to zero coefficients of the true regression to zero with positive probability. The LS-Lasso estimator is not variable selection consistent unless rather stringent conditions are imposed on the design matrix, and thus in general does not posses the \textit{oracle property}; see \cite{Adaptive} and \cite{Buhlmann} for details. Moreover, the LS-Lasso estimator has a bias problem: it can excessively shrink large coefficients. To remedy this issue, \cite{Adaptive} introduced the adaptive LS-Lasso, where adaptive weights are used for penalizing different coefficients of the $\ell_1$ norm of the coefficients, and showed that the adaptive Lasso can have the \textit{oracle property}. 

We note that whereas there exist extremely efficient algorithms to calculate the LS-Lasso, see \cite{Buhlmann}, LS-Bridge estimators with $q<1$ seem to be somewhat difficult to calculate. An algorithm to calculate LS-Bridge estimators with $q<1$ is described in \cite{Huang}. As \cite{Adaptive} points out, adaptive LS-Lasso estimators can be calculated using any of the algorithms available to calculate LS-Lasso estimators.

%\cite{Xie} prove that the LS-SCAD can have the oracle property in a regression model with a diverging number of parameters, i.e. $p=p_n\rightarrow\infty$. \cite{Huang} prove that the LS-Bridge estimator with a $\ell_q$ penalization with $0<q<1$ can also have the oracle property in a regression model with a diverging number of parameters. 

Penalized least squares estimators are not robust and may be highly inefficient under heavy tailed errors. In an attempt to remedy this issue, penalized M-estimators defined using a convex loss function have been proposed; see for example \cite{lad-lasso} and \cite{sinica}. Unfortunately, these estimators are not robust with respect to contaminations in the predictor variables.

\cite{Croux} proposed the Sparse-LTS estimator, a least trimmed squares estimator with a $\ell_1$ penalization. In a simulation study, \cite{Croux} show that the Sparse-LTS can be robust with respect to contamination in both the response and predictor variables. The Sparse-LTS estimator can be calculated for $p>n$. However, \cite{Croux} do not provide any asymptotic theory for their estimator. \cite{RLARS} propose a robust version of the LARS procedure, see \cite{LARS}, and in extensive simulations show that the RLARS procedure produces well behaved estimators under diverse contamination models. However, since the RLARS procedure is not based on the minimization of a clearly defined objective function, a theoretical analysis of its properties is difficult. \cite{esl-lasso} proposed a penalized regression estimator based on an exponential squared loss function. They prove that a \emph{local minimum} of the objective function used to define their estimator can have the \textit{oracle property}. On the other hand, their proposed estimators cannot be calculated in regression scenarios with $p>n$. \cite{Maronna} introduced S-Ridge and MM-Ridge estimators: $\ell_2$-penalized S- and MM-estimators of regression. In extensive simulation studies he shows that these estimators can be robust in a variety of contamination scenarios. However, $\ell_2$-penalized regression estimators do not produce sparse models. \cite{Maronna} does not provide any asymptotic theory for these estimators.

In this paper,  we study the robust and asymptotic properties of MM-Bridge and adaptive MM-Bridge estimators: $\ell_q$-penalized MM-estimators of regression and MM-estimators with an adaptive $\ell_t$ penalty. We obtain lower bounds on the breakdown points of MM-Bridge and adaptive MM-Bridge estimators. For the case of a fixed number of covariates, we prove the strong consistency of MM-Bridge and adaptive MM-Bridge estimators under general conditions. We derive the asymptotic distribution of MM-Bridge estimators for all $q$ and prove that for $q<1$ they can have the \textit{oracle property}. For the special case of $q=1$ we show that the coordinates of the MM-Bridge estimator corresponding to null coefficients of the true regression parameter will be set to zero with positive probability. See the comments following Theorem \ref{Dist-Asin Knight}. We show that adaptive MM-Bridge estimators can have the \textit{oracle property} for all $t\leq 1$. We propose an algorithm to calculate both MM-Bridge estimators with $q=1$, which we call MM-Lasso estimators, and adaptive MM-Bridge estimators with $t=1$, which we call adaptive MM-Lasso estimators. Our algorithm uses the S-Ridge estimator of \cite{Maronna} as an initial estimator and iteratively solves a weighted-Lasso type problem. Even though we derive our asymptotic results for fixed $p$, MM-Lasso and adaptive MM-Lasso estimators can be calculated for $p>n$. In extensive simulations, we study the performance with regards to stability in the presence of high-leverage outliers, and prediction accuracy and variable selection properties for uncontaminated samples of the MM-Lasso and adaptive MM-Lasso estimators. Finally, we apply our proposed estimators to a real high-dimensional data set.

The rest of this paper is organized as follows. In Section \ref{sec-MM y S} we review the definition and some of the most important properties of MM and S-estimators of regression. In Section \ref{sec-MM-Bridge} we define S-Bridge, MM-Bridge and adaptive MM-Bridge estimators, we study their robust and asymptotic theoretical properties and we describe an algorithm to compute MM-Lasso and adaptive MM-Lasso estimators. In Section \ref{sec-sim} we conduct an extensive simulation. In Section \ref{sec-real} we apply the aforementioned estimators to a real high-dimensional data set. Conclusions are provided in Section \ref{conclusions}. Finally, the proof of all our results are given in the Appendix.

\section{MM and S-estimators of regression}
\label{sec-MM y S}

We consider a linear regression model with random carriers: we observe $(\mathbf{x}_{i}^{\text{T}},y_{i})$ $i=1,...,n,$
i.i.d. $(p+1)$-dimensional vectors, where $y_{i}$ is the response variable and
$\mathbf{x}_{i}\in\mathbb{R}^{p}$ is a vector of random carriers, satisfying
\begin{equation}
y_{i}=\mathbf{x}_{i}^{\text{T}}\boldsymbol{\beta}_{0}\mathbf{+}u_{i}\text{ for
}i=1,...,n, \label{ML}%
\end{equation}
where $\alpha_0$ and $\boldsymbol{\beta}_{0}\in\mathbb{R}^{p}$ are to be estimated and $u_{i}$ is independent of $\mathbf{x}_{i}$. For $\bbet \in \mathbb{R}^{p}$ let $\mathbf{r}(\bbet)=(r_{1}(\bbet),...,r_{n}(\bbet))$, where $r_{i}(\bbet)=y_i-\mathbf{x}_{i}^{T}\bbet$. 
Some of the coefficients of $\bbet_0$ may be zero, and thus the corresponding carriers do not provide relevant information to predict $y$. We do not know in advance the set of indices corresponding to coefficients that are zero, and it may be of interest to estimate it. For simplicity, we will assume $\boldsymbol{\beta}_{0}=(\boldsymbol{\beta}_{0,I},\boldsymbol{\beta}_{0,II})$, where $\boldsymbol{\beta}_{0,I}\in\mathbb{R}^{s}$, $\boldsymbol{\beta}_{0,II}\in\mathbb{R}^{p-s}$, all the coordinates of $\boldsymbol{\beta}_{0,I}\in\mathbb{R}^{s}$ are non-zero and all the coordinates of $\boldsymbol{\beta}_{0,II}\in\mathbb{R}^{p-s}$ are zero.

Let $F_{0}$ be the distribution of the errors $u_{i}$, $G_{0}$ the
distribution of the carriers $\mathbf{x}_{i}$ and $H_{0}$ the distribution of
$\mathbf{(x}_{i}^{\text{T}}\mathbf{,}y_{i}\mathbf{)}$. Then $H_{0}$ satisfies%
\begin{equation}
H_{0}(\mathbf{x}\mathbf{,}y)=G_{0}(\mathbf{x)}F_{0}(y-\mathbf{x}^{\text{T}%
}\boldsymbol{\beta}_{0}). \label{H0}%
\end{equation}
Let $\mathbf{x}_{I}$ stand for the first $s$ coordinates of $\mathbf{x}$ and let $G_{0,I}$ be its distribution. For $\mathbf{b} \in \mathbb{R}^{p}$ and $q>0$ we note
\begin{equation}
\Vert \mathbf{b} \Vert_{q}=\left(\sum\limits_{j=1}^{p} \vert \mathbf{b}_{j}\vert^{q}\right)^{1/q}
\nonumber
\end{equation}
and $\Vert \mathbf{b} \Vert=\Vert \mathbf{b} \Vert_{2}$. Throughout this paper, $\rho$-function will refer to a bounded $\rho$-function, in the sense of \cite{Libro}. A popular choice of $\rho$-functions is Tukey's Bisquare family of functions given by 
\begin{equation}
\rho^{B}_{c}(u)=1-\left(1-\left(\frac{u}{c}\right)^{2}\right)^{3}I(\vert u \vert \leq c),
\label{rho-marina}
\end{equation}
where $c>0$ is a tuning constant. 

Given a sample $\mathbf{u}=(u_{1},...,u_{n})$ from some distribution $F$ and $0<b < 1$ the corresponding
M-estimate of scale $s_{n}(\mathbf{u})$ is defined by
\begin{equation}
s_{n}(\mathbf{u})=\inf\left\{  s>0:\frac{1}{n}\sum\limits_{i=1}^{n}%
\rho\left(  \frac{u}{s}\right)  \leq b\right\}  . \nonumber%
\end{equation}
It is easy to prove that $s_{n}(\mathbf{u})>0$ if and only if $\#\{i:u_{i}%
=0\}<(1-b)n,$ and in this case%
\begin{equation}
\frac{1}{n}\sum\limits_{i=1}^{n}\rho\left(  \frac{u_{i}}{s_{n}(\mathbf{u}%
)}\right)  =b. \label{sn}%
\end{equation}

The robustness of an estimator is measured by its stability when a
small fraction of the observations are arbitrarily replaced by outliers
that may not follow the assumed model. A robust estimator should not be
much affected by a small fraction of outliers. A quantitative measure of an estimator's robustness, introduced
by \cite{Donoho}, is the finite-sample replacement breakdown point. 
Loosely speaking, the finite-sample replacement breakdown point of an estimator is the minimum fraction of outliers that may take the estimator beyond any bound.
For a regression estimator, this measure is defined as follows. Given a sample
$\mathbf{z}_{i}=(\mathbf{x}_{i}^{\text{T}},y_{i})$, $i=1,...,n$, let
$\mathbf{Z}=\{\mathbf{z}_{1},...,\mathbf{z}_{n}\}$ and let $\hat
{\bbet}(\mathbf{Z})$ be a regression estimator. The finite-sample
replacement breakdown point of $\hat{\bbet}$ is then defined as
\begin{equation}
FBP(\hat{\bbet})=\frac{m^{\ast}}{n},
\nonumber
\end{equation}
where
\begin{equation}
m^{\ast}=\max \left\{m\geq0:\hat{\bbet}(\mathbf{Z}_{m})\text{ is
bounded for all }\mathbf{Z}_{m}\in\mathcal{Z}_{m}\right\},
\nonumber
\end{equation}
and $\mathcal{Z}_{m}$ is the set of all datasets with at least $n-m$ elements
in common with $\mathbf{Z}$.

A breakdown point equal to $\varepsilon^{*}$ only guarantees that for any given contamination fraction $\varepsilon \leq \varepsilon^{*}$, there exists a compact set such that the estimator in question remains in that compact set whenever a fraction of $\varepsilon$ observations is arbitrarily modified. However, this compact set may be very large. Thus, although a high breakdown point is always a desirable property, an estimator that has a high breakdown point can still be
largely affected by a small fraction of contaminated observations.

Given a sample $(\mathbf{x}_{i}^{\text{T}},y_{i})$, $i=1,...,n$ from the model
given in (\ref{H0}), \cite{S} define the S-estimator of
regression as
\begin{equation}
\hat{\boldsymbol{\beta}}_{S}=\arg\min_{\boldsymbol{\beta}\in\mathbb{R}^{p}%
}s_{n}(\mathbf{r}(\bbet)),
\label{S-estim}
\end{equation}
where $s_n$ is a M-estimate of scale. \cite{Fasano} derive the asymptotic distribution of
S-estimators of regression under very general conditions. S-estimators can always be tuned so as to attain the maximum possible
finite-sample replacement breakdown point for regression equivariant estimators. However, S-estimators cannot combine high breakdown point with high
efficiency at the normal distribution, see \cite{Hossjer}.

Let $(\mathbf{x}_{i}^{\text{T}},y_{i})$, $i=1,...,n,$ be a sample satisfying
\eqref{H0}, and $\rho_0$ and $\rho_1$ be two $\rho$-functions satisfying $\rho_1 \leq \rho_0$. Then \cite{MM 87} defines the MM-estimator of regression as
\begin{equation}
\hat{\bbet}_{MM}=\arg\min_{\bbet\in\mathbb{R}^{p}%
}\sum\limits_{i=1}^{n}\rho_{1}\left(  \frac{r_{i}(\bbet)}{s_{n}%
(\mathbf{r}(\hat{\bbet}_{1}))}\right),
\label{def MM}
\end{equation}
where $\hat{\bbet}_{1}$ is a consistent and high breakdown point estimate of $\bbet_{0}$ and $s_{n}(\mathbf{r}(\hat{\bbet}_{1}))$ is the M-estimate of scale of the residuals of $\hat{\bbet}_{1}$, calculated using $\rho_0$ and $b$.

\cite{MM 87}, proves that under general conditions, MM-estimators are
strongly consistent for $\bbet_{0}$, and furthermore
\begin{equation}
\sqrt{n}(\hat{\bbet}_{MM}-\bbet_{0})\cw \mathit{N}_{p} \left(\mathbf{0}, s ( \bbet_{0} ) ^{2}\frac{a(\psi_1,F_{0})}{b(\psi_1,F_{0})^{2}}\mathbf{V}%
_{\mathbf{x}}^{-1}\right), \label{distasin MM}%
\end{equation}
where $\mathbf{V}_{\mathbf{x}}=E_{G_{0}}(\mathbf{xx}^{\text{T}})$, $s(\bbet_{0})$
is defined by
\begin{equation}
E_{H_0}\rho_0\left(\frac{y-\mathbf{x}^T\bbet_0}{s(\bbet_0)}\right)=b,
\nonumber
\end{equation}
\begin{equation}
a(\psi,F_{0})=E_{F_{0}}\psi^{2}\left(  \frac{u}{s(\bbet_0)}\right)
\nonumber
\end{equation}
and
\begin{equation}
b(\psi,F_{0})=
E_{F_{0}}\psi^{\prime}\left(  \frac{u}{s(\bbet_0)}\right).
\nonumber
\end{equation}

Besides, he shows that $\rho_{1}$ can be chosen so that the resulting
MM-estimator has simultaneously the two following properties:

\begin{itemize}
\item Normal asymptotic efficiency as close to one as desired.

\item Breakdown point greater than or equal to that of the initial estimator.
\end{itemize}

\citet{Libro} recommend to take $\rho_0 = \rho^{B}(u/c_{0})$ and $\rho_1 = \rho^{B}(u/c_{1})$.
The tuning constant for $\rho_0$, $c_0$, should be chosen so that the
resulting M-estimate of scale be consistent for the error standard deviation in the case of normal errors.
The choice of $c_1$ should aim at striking a balance between robustness and efficiency.
\citet{Libro} recommend to choose $c_{1}$ so that the
MM-estimator has an asymptotic efficiency of 85\% at the normal distribution.
The reason for choosing an 85\% asymptotic efficiency at the normal
distribution, is that at this level of the efficiency the MM-estimator has the
same maximum asymptotic bias as the initial S-estimator of regression for the
case of normal errors and normal carriers.

\section{S-Bridge, MM-Bridge and adaptive MM-Bridge estimators of regression}
\label{sec-MM-Bridge}

Given a sample $(\mathbf{x}_{i}^{\text{T}},y_{i})$, $i=1,...,n$, $\gamma_n>0$, $r>0$, a $\rho$-function $\rho_0$ and $0<b< 1$, we define the $\ell_r$-penalized S-Bridge estimator of regression following \cite{Maronna} as
\begin{equation}
\hat{\boldsymbol{\beta}}_{PS}=\arg\min_{\boldsymbol{\beta}\in\mathbb{R}^{p}%
} n \: s_{n}^2(\mathbf{r}(\boldsymbol{\beta}))+\gamma_n\Vert\boldsymbol{\beta}\Vert_{r}^{r},
\label{p-S}
\end{equation}
where $s_{n}(\mathbf{r}(\boldsymbol{\beta}))$ is the residual scale estimate defined using $\rho_0$ and $b$. If the model contains an intercept, then it is not penalized.

It is easy to see that
\begin{equation}
\Vert\hat{\bbet}_{PS}\Vert_{r}^{r} \leq \Vert\hat{\bbet}_{S}\Vert_{r}^{r},
\label{norm-ineq-S}
\end{equation}
where $\hat{\bbet}_{S}$ is the S-estimator calculated using $\rho_0$ and $b$.

Given another $\rho$-function $\rho_1$ which satisfies $\rho_1 \leq \rho_0$ and $\lambda_n >0$ we define the $\ell_q$-penalized MM-Bridge estimator of regression as
\begin{equation}
\hat{\boldsymbol{\beta}}_{B}=\arg\min_{\bbet\in\mathbb{R}^{p}%
}\sum\limits_{i=1}^{n}\rho_{1}\left(  \frac{r_{i}(\bbet)}{s_{n}%
(\mathbf{r}(\hat{\bbet}_{1}))}\right)+\lambda_n\Vert\boldsymbol{\beta}\Vert_{q}^{q},
\label{p-MM}
\end{equation}
where $\hat{\bbet}_{1}$ is a consistent initial estimate of $\bbet_0$. Clearly, the robustness of the MM-Bridge estimator will depend heavily on the robustness of the initial estimate. If the model contains an intercept, then it is not penalized. For the case $q=1$ we will call the resulting estimator MM-Lasso.

Note that our definition of a MM-Bridge estimator with $r=2$ and $q=2$, is not exactly the same as the definition of MM-Ridge estimators of \cite{Maronna}. For a given $\lambda_n$, the MM-Ridge of \cite{Maronna} is equal to our MM-Ridge estimator calculated with $\lambda_n/s_{n}%
(\mathbf{r}(\hat{\bbet}_{1}))^2$. Nonetheless, our asymptotic results can be very easily adapted to cover the MM-Ridge estimators as defined by \cite{Maronna}. However, this is not the case for our results concerning the finite-sample breakdown point of MM-Bridge estimators. \cite{Maronna} points out that the finite-sample breakdown point of MM-Ridge estimators is, for a fixed penalization parameter and according to his definition of MM-Ridge estimators, greater than or equal to the breakdown point of the residual scale $s_n$. In Theorem \ref{Theo-break-br}, we show that for a fixed penalization parameter and according to our definition of MM-Bridge estimators, the breakdown point of any MM-Bridge estimator is greater than $1-1/n$.
%
%It is easy to see that
%\begin{equation}
%\lambda_n\Vert\hat{\bbet}_{B}\Vert_{q}^{q} \leq \lambda_n\Vert\hat{\bbet}_{MM}^{*}\Vert_{q}^{q},
%\label{norm-ineq-MM}
%\end{equation}
%where $\hat{\bbet}_{MM}^{*}$ is the MM-estimator calculated using $\rho_0$, $\rho_1$,$ \hat{\bbet}_{1}$ and $b$ in \eqref{def MM}.

Given $\varsigma>0$, $t>0$ and $\iota_n$  we define the adaptive MM-Bridge estimator of regression as
\begin{equation}
\hat{\boldsymbol{\beta}}_{A}=\arg\min_{\bbet\in\mathbb{R}^{p}%
}\sum\limits_{i=1}^{n}\rho_{1}\left(  \frac{r_{i}(\bbet)}{s_{n}%
(\mathbf{r}(\hat{\bbet}_{1}))}\right)+\iota_n \sum\limits_{i=1}^{p} \frac{\vert \beta_j \vert^{t}}{\vert \hat{\beta}_{2,j} \vert^{\varsigma}},
\label{p-AMM}
\end{equation}
where $\hat{\bbet}_{2}$ is a consistent initial estimate of $\bbet_0$. Clearly if $\hat{\beta}_{2,j}=0$ for some $j$, then $\hat{\boldsymbol{\beta}}_{A,j}=0$. If the model contains an intercept, then it is not penalized. For the case $t=1$ we will call the resulting estimator adaptive MM-Lasso.
Note that for coefficients corresponding to large coefficients of $\hat{\bbet}_{2}$, the adaptive MM-Lasso employs a small penalty; this ameliorates the bias issues associated with the $\ell_1$ penalty. 

 \citet{esl-lasso} prove that their estimator can have the highest possible breakdown point among regression equivariant estimators, but it must be noted that their estimator is not regression equivariant. \cite{Croux} show that the breakdown point of the Sparse-LTS estimator is $(n-h+1)/n$, where $n-h$ is the number of trimmed observations, and prove that the breakdown point of the LS-Lasso estimator is $1/n$. Note that it follows immediately from \eqref{norm-ineq-S} that for any $\gamma_n$, the finite-sample breakdown point of $\hat{\bbet}_{PS}$ is at least as high as that of $\hat{\bbet}_{S}$. In Theorem \ref{Theo-break-br}, we prove that for any fixed $\lambda_n>0$, the breakdown point of $\hat{\bbet}_{B}$ is equal to $1-1/n$. In Theorem \ref{Theo-break-ad}, we prove that for any fixed $\iota_n>0$, the breakdown point of $\hat{\bbet}_{A}$ is greater than or equal to the breakdown point of $\hat{\bbet}_{2}$. However, one could argue that since $\hat{\bbet}_{PS}$, $\hat{\bbet}_{B}$ and $\hat{\bbet}_{A}$ are not regression equivariant, these results are rather vacuous. See \cite{davies-gather}.

\begin{theorem}
\label{Theo-break-br}
If $\lambda_n>0$ is fixed, then $FBP(\hat{\bbet}_{B}) \geq 1-1/n$
\end{theorem}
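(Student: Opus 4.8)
The plan is to exploit the fact that in \eqref{p-MM} the loss term is bounded while the penalty is coercive, so the penalty alone pins down the estimator uniformly over all datasets, whatever the contamination. Since $\rho_1$ is a bounded $\rho$-function, set $M=\sup_u \rho_1(u)<\infty$ and write the objective as
\begin{equation}
L(\bbet)=\sum_{i=1}^{n}\rho_{1}\!\left(\frac{r_{i}(\bbet)}{s_{n}(\mathbf{r}(\hat{\bbet}_{1}))}\right)+\lambda_n\Vert\bbet\Vert_{q}^{q}.
\nonumber
\end{equation}
Because $0\le\rho_1\le M$, the loss never exceeds $nM$, whereas $\lambda_n\Vert\bbet\Vert_{q}^{q}\to\infty$ as $\Vert\bbet\Vert\to\infty$ for any fixed $\lambda_n>0$ and any $q>0$; hence $L$ is coercive and a minimizer exists.

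The central step is to compare the minimizer with the origin. Since $r_{i}(\mathbf{0})=y_i$, $\Vert\mathbf{0}\Vert_{q}^{q}=0$, and $\rho_1\le M$,
\begin{equation}
\lambda_n\Vert\hat{\bbet}_{B}\Vert_{q}^{q}\le L(\hat{\bbet}_{B})\le L(\mathbf{0})=\sum_{i=1}^{n}\rho_{1}\!\left(\frac{y_i}{s_{n}(\mathbf{r}(\hat{\bbet}_{1}))}\right)\le nM,
\nonumber
\end{equation}
so that $\Vert\hat{\bbet}_{B}\Vert_{q}\le (nM/\lambda_n)^{1/q}$, and by equivalence of norms on $\mathbb{R}^{p}$ one gets a bound on $\Vert\hat{\bbet}_{B}\Vert$ depending only on $n$, $M$, $\lambda_n$, $q$ and $p$. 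The decisive feature is that this constant involves neither the observations $(\mathbf{x}_{i}^{\mathrm{T}},y_i)$ nor the value of the scale: it is the \emph{same} for every dataset, which is also why the bound does not depend on the initial estimate $\hat{\bbet}_{1}$.

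Translating this into the breakdown statement is then immediate. Fix $m\le n-1$ and let $\mathbf{Z}_m\in\mathcal{Z}_m$ be any sample sharing at least $n-m\ge 1$ points with $\mathbf{Z}$. Applying the comparison above to $\mathbf{Z}_m$ gives the identical uniform bound $\Vert\hat{\bbet}_{B}(\mathbf{Z}_m)\Vert_{q}\le (nM/\lambda_n)^{1/q}$, so $\hat{\bbet}_{B}$ stays bounded for every such $\mathbf{Z}_m$. By the definition of $m^{\ast}$ this yields $m^{\ast}\ge n-1$, i.e. $FBP(\hat{\bbet}_{B})\ge 1-1/n$.

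The only point that needs care — and the main obstacle — is ensuring that the objective is well defined, i.e. that $s_{n}(\mathbf{r}(\hat{\bbet}_{1}))>0$, so that the ratios $r_i/s_n$ and the bound $\rho_1\le M$ are meaningful; by the characterization recalled after the definition of $s_n$, this holds precisely when $\#\{i:r_{i}(\hat{\bbet}_{1})=0\}<(1-b)n$. I would verify that the comparison bound applies verbatim whenever the estimator is defined, and observe that it is exactly the degenerate case of a vanishing scale under full contamination that blocks improving the bound beyond $1-1/n$, consistent with the surrounding remark that the breakdown point of $\hat{\bbet}_{B}$ equals $1-1/n$.
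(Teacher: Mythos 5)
Your proof is correct and rests on exactly the same idea as the paper's: compare the objective at $\hat{\bbet}_{B}$ with its value at $\mathbf{0}$ and use the boundedness of $\rho_1$ together with the coercivity of the penalty. The only difference is presentational — you extract the explicit uniform bound $\Vert\hat{\bbet}_{B}\Vert_{q}^{q}\leq nM/\lambda_n$ directly, whereas the paper phrases the same comparison as a contradiction along a subsequence with $\Vert\hat{\bbet}_{B}^{N}\Vert\rightarrow\infty$.
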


\begin{theorem}
\label{Theo-break-ad}
If $\iota_n>0$ is fixed, then $FBP(\hat{\bbet}_{A}) \geq FBP(\hat{\bbet}_{2})$.
\end{theorem}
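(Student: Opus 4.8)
The plan is to show that as long as the initial estimator $\hat{\bbet}_2$ stays bounded under contamination, the adaptive penalty remains coercive and thereby forces $\hat{\bbet}_A$ to stay bounded too; once $\hat{\bbet}_2$ breaks down its coordinates can diverge, the corresponding weights $1/|\hat{\beta}_{2,j}|^{\varsigma}$ collapse to zero, and the bounded MM-loss can no longer control $\hat{\bbet}_A$, which is precisely why one can only hope for the inequality $FBP(\hat{\bbet}_{A}) \geq FBP(\hat{\bbet}_{2})$. Concretely, write $m_2^{\ast}=n\cdot FBP(\hat{\bbet}_{2})$, fix $m\leq m_2^{\ast}$, and fix an arbitrary contaminated sample $\mathbf{Z}_{m}\in\mathcal{Z}_{m}$. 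By the definition of $m_2^{\ast}$ there is a finite constant $C_{m}$, depending only on the original sample and on $m$ but not on the particular $\mathbf{Z}_{m}$, with $\Vert\hat{\bbet}_{2}(\mathbf{Z}_{m})\Vert\leq C_{m}$ for every $\mathbf{Z}_{m}\in\mathcal{Z}_{m}$; in particular $|\hat{\beta}_{2,j}|\leq C_{m}$ for each coordinate $j$.

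The key step is to turn this uniform upper bound on the coordinates of $\hat{\bbet}_{2}$ into a uniform positive lower bound on the adaptive weights. For every $j$ with $\hat{\beta}_{2,j}\neq 0$ we have $1/|\hat{\beta}_{2,j}|^{\varsigma}\geq 1/C_{m}^{\varsigma}>0$, while for every $j$ with $\hat{\beta}_{2,j}=0$ the estimator satisfies $\hat{\beta}_{A,j}=0$ by construction. Hence the adaptive penalty dominates a genuinely positively weighted $\ell_{t}$ penalty, $\iota_{n}\sum_{j}|\beta_{j}|^{t}/|\hat{\beta}_{2,j}|^{\varsigma}\geq(\iota_{n}/C_{m}^{\varsigma})\sum_{j:\hat{\beta}_{2,j}\neq 0}|\beta_{j}|^{t}$, and the sum on the right may be taken over all $j$ since the excluded coordinates of $\hat{\bbet}_{A}$ vanish.

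Next I would bound the objective from above by evaluating it at $\bbet=\mathbf{0}$. Because $\rho_{1}$ is a bounded $\rho$-function, the loss term never exceeds $n\sup\rho_{1}$, and at $\bbet=\mathbf{0}$ the penalty is zero; since $\hat{\bbet}_{A}$ minimizes the objective, its objective value is at most $n\sup\rho_{1}$. Discarding the non-negative loss term evaluated at $\hat{\bbet}_{A}$ and inserting the weight bound from the previous step gives $\sum_{j=1}^{p}|\hat{\beta}_{A,j}|^{t}\leq n\sup\rho_{1}\cdot C_{m}^{\varsigma}/\iota_{n}$. The right-hand side is a finite constant independent of the particular $\mathbf{Z}_{m}$, so $\hat{\bbet}_{A}$ is bounded uniformly over $\mathcal{Z}_{m}$. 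Taking $m=m_2^{\ast}$ then yields $m_A^{\ast}\geq m_2^{\ast}$, that is $FBP(\hat{\bbet}_{A})\geq FBP(\hat{\bbet}_{2})$.

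The only point needing care is that the scale $s_{n}(\mathbf{r}(\hat{\bbet}_{1}))$ inside $\rho_{1}$ must be strictly positive for $\hat{\bbet}_{A}$ and the comparison above to be meaningful; this holds as soon as fewer than $(1-b)n$ of the residuals vanish, which is ensured under the standing non-degeneracy assumptions, and I expect this to be the only technical obstacle. It is worth emphasising that the boundedness of $\rho_{1}$ is exactly what prevents the loss from controlling $\hat{\bbet}_{A}$ by itself: the entire argument rests on the coercivity of the penalty, which in turn rests solely on the weights being bounded below, and hence on the boundedness of $\hat{\bbet}_{2}$.
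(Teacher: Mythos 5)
Your proof is correct and rests on exactly the same mechanism as the paper's: comparison of the objective at $\hat{\bbet}_{A}$ with its value at $\bbet=\mathbf{0}$, the boundedness of $\rho_{1}$, and the fact that boundedness of $\hat{\bbet}_{2}$ keeps the adaptive weights away from zero (with the convention $\hat{\beta}_{A,j}=0$ whenever $\hat{\beta}_{2,j}=0$). The only difference is presentational: you extract an explicit uniform bound $\sum_{j}|\hat{\beta}_{A,j}|^{t}\leq n\sup\rho_{1}\cdot C_{m}^{\varsigma}/\iota_{n}$, whereas the paper argues by contradiction along a divergent subsequence.
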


Note that if $\hat{\bbet}_{2} = \hat{\bbet}_{B}$, then $FBP(\hat{\bbet}_{A})\geq 1-1/n$ whenever $\lambda_n,\iota_n >0$.
In practice, $\gamma_n$, $\lambda_n$ and $\iota_n$ may be chosen via some data-driven procedure such as cross-validation. In this case, the breakdown point of the resulting MM-Bridge and adaptive MM-Bridge estimators may be lower than $1-1/n$. The robustness of the resulting estimators will depend sorely on the robustness of the cross-validation scheme, and hence the use of robust residual scales as objective functions, instead of the classical root mean squared error, is crucial.

\subsection{Asymptotics}
\label{sec-asym}
We now describe the set-up to study the asymptotic properties of S-Bridge, MM-Bridge and adaptive MM-Bridge estimators of regression.
We will assume that
\begin{itemize}

\item[B1.] $\rho_0$ and $\rho_1$ are twice continuously differentiable and eventually constant.

\item[B2.] $P_{G_{0}}\left( \mathbf{x}^{T} \bbet=0\right)
<1-b$ for all non-zero $\bbet\in\mathbb{R}^{p}$.

\item[B3.] $F_{0}$ has an even continuous density, $f_{0}$, that is a monotone
decreasing function of $|u|$ and a strictly decreasing function of $|u|$ in a
neighborhood of 0.

\end{itemize}
A family of $\rho$-functions that satisfies [B1] is Tukey's Bisquare family of functions, given in \eqref{rho-marina}. Condition [B2] is needed in the proof of the consistency of S-Bridge estimators. Note that condition [B3] does not require finite moments from $F_0$. Thus, extremely heavy tailed error distributions, such as Cauchy's distribution, can be easily seen to satisfy [B3]. However, [B3] does impose a rather stringent symmetry assumption on the error distribution. This requirement greatly simplifies the asymptotic treatment of the estimators and is usual in robust statistics. 

The following theorem proves the strong consistency of S-Bridge, MM-Bridge and adaptive MM-Bridge estimators of regression whenever $\gamma_n=o(n)$, $\lambda_n=o(n)$ and $\iota_n=o(n)$ respectively.

\begin{theorem}
\label{Cons}
Let $(\mathbf{x}_{i}^{\text{T}},y_{i})$, $i=1,...,n,$ be i.i.d observations
with distribution $H_{0}$, which satisfies \eqref{H0}.  Assume [B1]-[B3] hold. Then 
\begin{enumerate}
\item[(i)] If $\gamma_n=o(n)$, $\hat{\boldsymbol{\beta}}_{PS} \cas \bbet_{0}$.
\label{S-cs}
\item[(ii)] If $\lambda_n=o(n)$, $\hat{\boldsymbol{\beta}}_{B} \cas \bbet_{0}$.
\label{MM-cs}
\item[(iii)] If $\iota_n=o(n)$, $\hat{\boldsymbol{\beta}}_{A} \cas \bbet_{0}$.
\label{AMM-cs}
\end{enumerate}
\end{theorem}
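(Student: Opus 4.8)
The plan is to treat all three parts with a single M-estimation template. Write each estimator as the minimizer of an objective of the form $n^{-1}\Phi_n(\bbet)+\pi_n(\bbet)$, where $\Phi_n$ is the relevant unpenalized dispersion term ($s_n^2(\mathbf{r}(\bbet))$ for the S-Bridge, $\sum_i\rho_1(r_i(\bbet)/s_n(\mathbf{r}(\hat{\bbet}_1)))$ for the MM-Bridge, and the same sum for the adaptive MM-Bridge) and $\pi_n\geq0$ is the normalized penalty. The key structural remark, and the one that makes the mild rate conditions suffice, is that since $\pi_n\geq0$ it may simply be discarded when one lower-bounds the objective at the estimator; hence one never needs $\pi_n$ to vanish uniformly, only to vanish \emph{at the single point} $\bbet_0$. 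The argument then reduces to four ingredients: (a) a uniform strong law giving $\sup_{\bbet\in K}|n^{-1}\Phi_n(\bbet)-L(\bbet)|\cas0$ on compact sets $K$, for an appropriate deterministic limit $L$; (b) $L$ is uniquely minimized at $\bbet_0$; (c) $\pi_n(\bbet_0)\cas0$; and (d) the minimizers eventually lie in a fixed compact set almost surely. Granting these, if $\hat{\bbet}\not\to\bbet_0$ one passes to a subsequence converging in $K$ to some $\bbet^\ast\neq\bbet_0$; then $\liminf n^{-1}\Phi_n(\hat{\bbet})\geq L(\bbet^\ast)>L(\bbet_0)$ by (a) and (b), while the objective at $\hat{\bbet}$ is at most its value at $\bbet_0$, which tends to $L(\bbet_0)$ by (a) and (c) --- a contradiction.

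For part (i) the limit is the population $M$-scale $s(\bbet)$, characterized by $E_{H_0}\rho_0((y-\mathbf{x}^T\bbet)/s(\bbet))=b$; its unique minimization at $\bbet_0$ is the standard identification for S-estimators and uses [B2]-[B3] (cf. \cite{Fasano}). Here ingredient (d) is essentially free: the norm inequality \eqref{norm-ineq-S} gives $\Vert\hat{\bbet}_{PS}\Vert_r^r\leq\Vert\hat{\bbet}_S\Vert_r^r$, and since the S-estimator is strongly consistent the right-hand side converges to $\Vert\bbet_0\Vert_r^r$, so $\hat{\bbet}_{PS}$ is a.s. eventually bounded. Ingredient (c) holds because $\gamma_n/n\to0$.

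For parts (ii) and (iii) I would first show that the preliminary scale converges, $s_n(\mathbf{r}(\hat{\bbet}_1))\cas s(\bbet_0)=:s_0$, using the strong consistency of $\hat{\bbet}_1$ together with continuity of the scale functional. The limit is then $L(\bbet)=E_{H_0}\rho_1((y-\mathbf{x}^T\bbet)/s_0)$. Writing $y-\mathbf{x}^T\bbet=u-\mathbf{x}^T(\bbet-\bbet_0)$ and conditioning on $\mathbf{x}$, the symmetry and unimodality of $f_0$ in [B3] together with $\rho_1$ being even and nondecreasing in $|u|$ force the shift $a\mapsto E_{F_0}\rho_1((u-a)/s_0)$ to be minimized at $a=0$, so $L(\bbet)\geq L(\bbet_0)$ with equality iff $\mathbf{x}^T(\bbet-\bbet_0)=0$ a.s.; by [B2] this forces $\bbet=\bbet_0$, giving (b). For (c) in the plain MM-Bridge case, $\lambda_n/n\to0$ suffices; in the adaptive case the normalized penalty at $\bbet_0$ is $(\iota_n/n)\sum_j|\beta_{0,j}|^t/|\hat{\beta}_{2,j}|^\varsigma$, and every coordinate with $\beta_{0,j}=0$ contributes exactly zero, while the finitely many coordinates with $\beta_{0,j}\neq0$ have weights $|\hat{\beta}_{2,j}|^{-\varsigma}$ converging a.s. to the positive constants $|\beta_{0,j}|^{-\varsigma}$; hence $\pi_n(\bbet_0)\cas0$ as long as $\iota_n=o(n)$.

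The main obstacle is ingredient (d), the a.s. confinement of the minimizers to a compact set, in the two MM cases, since here there is no analogue of \eqref{norm-ineq-S}. I would establish it through the behaviour of $L$ at infinity: as $\Vert\bbet\Vert\to\infty$ the residual $y-\mathbf{x}^T\bbet$ diverges for $\mathbf{x}$ outside a hyperplane, so by dominated convergence $L(\bbet)\to\sup\rho_1>L(\bbet_0)$, the strict inequality holding because the error density makes $E_{F_0}\rho_1(u/s_0)<\sup\rho_1$. Combining this with a uniform strong law valid on large balls (the class $\{\rho_1((y-\mathbf{x}^T\bbet)/s)\}$ being uniformly bounded and Lipschitz by [B1]) and with [B2] to rule out mass escaping along a hyperplane, one gets that outside a fixed ball the normalized dispersion term eventually exceeds $L(\bbet_0)+\delta$; since the penalty is nonnegative this bound survives for the full objective, so the minimizer cannot leave the ball. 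It is precisely the nonnegativity of the penalty at this step, and at the lower-bounding step of the main argument, that renders the diverging adaptive weights attached to the null coordinates harmless, so that only $\iota_n=o(n)$ is needed.
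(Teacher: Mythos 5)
Your overall strategy --- discard the nonnegative penalty at the estimator, evaluate it only at $\bbet_0$, and reduce to an argmin-consistency argument for the unpenalized criterion --- is exactly the skeleton of the paper's proof, and part (i), the identification of the limit criteria, and their unique minimization at $\bbet_0$ are all fine. The genuine gap is in your ingredient (d) for the two MM cases, i.e.\ the uniform lower bound on the empirical dispersion away from $\bbet_0$. You propose to combine the pointwise limit $L(\bbet)\to\sup\rho_1$ at infinity with ``a uniform strong law valid on large balls''; but a uniform strong law on a compact ball says nothing about the empirical process \emph{outside} that ball, so the conclusion that the normalized dispersion eventually exceeds $L(\bbet_0)+\delta$ on the unbounded region $\{\Vert\bbet\Vert>R\}$ does not follow from the tools you invoke. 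Moreover, your route to the uniform law itself --- ``the class is uniformly bounded and Lipschitz by [B1]'' --- is shaky under the hypotheses of this theorem: the Lipschitz constant in $\bbet$ is of order $\Vert\mathbf{x}\Vert/s$, and [B1]--[B3] impose no moment conditions on $G_0$ (assumption [B4] is not in force here), so a bracketing/Lipschitz argument has no integrable envelope for the increments.

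The paper closes both holes with a single device: it shows that the graphs of the family $\mathcal{H}=\{\rho_1((y-\mathbf{x}^{T}\mathbf{b})/s):\mathbf{b}\in\mathbb{R}^{p},\,s>0\}$ form a VC class with a \emph{constant} envelope, hence $\mathcal{H}$ is Glivenko--Cantelli uniformly over all of $\mathbb{R}^{p}\times(0,\infty)$ --- no moments on $\mathbf{x}$ and no compactness are needed, and the supremum over the scale handles the randomness of $s_n$ at the same time. Combined with the Fisher consistency of the MM-functional (Theorem 6 of \cite{Fasano}), this gives directly that $\liminf_n$ of the empirical dispersion over $\Vert\bbet-\bbet_0\Vert\geq\varepsilon$ exceeds $b^{*}=E_{F_0}\rho_1(u/s(\bbet_0))$ almost surely, so the separate confinement step that you flag as ``the main obstacle'' simply disappears. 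If you wish to keep your compactification-at-infinity route instead, you must supply an explicit argument on the unbounded region (e.g.\ monotonicity of $\rho_1$ along rays together with a finite covering of the unit sphere, in the style of the classical S-estimator consistency proofs), not merely the uniform law on balls.
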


In practice, we will use the S-Ridge estimator of \cite{Maronna} as the initial estimate $\hat{\bbet}_{1}$ in \eqref{p-MM} and \eqref{p-AMM}. Note that according to Theorem \ref{Cons} and the remarks above Theorem \ref{Theo-break-br}, the S-Ridge is a high breakdown point and consistent estimate of $\bbet_0$, as long as the penalization parameter satisfies $\gamma_n=O(\sqrt{n})$.

In order to obtain the rate of convergence of MM-Bridge and adaptive MM-Bridge estimators we will have to make the following additional assumption:

\begin{itemize}
\item[B4.] $G_{0}$ has finite second moments and $\mathbf{V}_{\mathbf{x}}=E_{G_{0}}\mathbf{xx}^{\text{T}}$ is non-singular.
\end{itemize}

In the next theorem, we prove the $\sqrt{n}$-consistency of MM-Bridge and adaptive MM-Bridge estimators. 

\begin{theorem}
\label{Rate}
Let $(\mathbf{x}_{i}^{\text{T}},y_{i})$, $i=1,...,n,$ be i.i.d observations with distribution $H_{0}$, which satisfies \eqref{H0}.  Assume [B1]-[B4] hold. Then
\begin{enumerate}
\item[(i)] If $\lambda_n = O( \sqrt{n})$, then $\Vert \hat{\bbet}_{B} - \bbet_{0} \Vert =  O_{P}(1/\sqrt{n})$.
\item[(ii)] If $\iota_n = O( \sqrt{n})$, then $\Vert \hat{\bbet}_{A} - \bbet_{0} \Vert =  O_{P}(1/\sqrt{n})$.
\end{enumerate}
\end{theorem}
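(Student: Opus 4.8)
The plan is to adapt the now-standard argument of \citet{Knight} and \citet{Fan} for root-$n$ consistency of penalized estimators to the present robust, non-convex setting. Write the objective function for the MM-Bridge estimator as $Q_n(\bbet)=\sum_{i=1}^n \rho_1(r_i(\bbet)/s_n)+\lambda_n\Vert\bbet\Vert_q^q$, where $s_n=s_n(\mathbf{r}(\hat\bbet_1))$ does not depend on $\bbet$ and, by consistency of $\hat\bbet_1$ together with [B1], converges almost surely to $s(\bbet_0)$. Since $\hat\bbet_B$ is a global minimizer, $Q_n(\hat\bbet_B)\le Q_n(\bbet_0)$, and by Theorem \ref{Cons}(ii) we already know $\hat\bbet_B\cas\bbet_0$. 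Setting $\mathbf{u}_n=\sqrt n(\hat\bbet_B-\bbet_0)$, it suffices to show $\Vert\mathbf u_n\Vert=O_P(1)$, and the whole strategy is to bound $Q_n(\bbet_0+\mathbf u_n/\sqrt n)-Q_n(\bbet_0)$ below by a term growing quadratically in $\Vert\mathbf u_n\Vert$ minus a term growing only linearly, and then to invoke the minimizing inequality.

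For the loss part I would carry out a second-order Taylor expansion of each $\rho_1((u_i-\mathbf x_i^T\mathbf u_n/\sqrt n)/s_n)$ around $u_i/s_n$. The linear term aggregates into $-s_n^{-1}\mathbf u_n^T W_n$ with $W_n=n^{-1/2}\sum_i\psi_1(u_i/s_n)\mathbf x_i$ and $\psi_1=\rho_1'$; because $\psi_1$ is odd and $F_0$ is symmetric by [B3], and $u_i$ is independent of $\mathbf x_i$, the summand has mean zero, and since $\psi_1$ is bounded by [B1] and $E\Vert\mathbf x\Vert^2<\infty$ by [B4], the central limit theorem gives $W_n=O_P(1)$ (after replacing $s_n$ by $s(\bbet_0)$, the error being of lower order). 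The quadratic term aggregates into $\tfrac12 s_n^{-2}n^{-1}\sum_i\psi_1'(u_i/s_n)(\mathbf x_i^T\mathbf u_n)^2$, which by the law of large numbers, independence and [B4] converges to $\tfrac12 s(\bbet_0)^{-2}b(\psi_1,F_0)\,\mathbf u_n^T\mathbf V_{\mathbf x}\mathbf u_n$; using $b(\psi_1,F_0)>0$ and the nonsingularity of $\mathbf V_{\mathbf x}$, this is bounded below by $c\Vert\mathbf u_n\Vert^2$ for some $c>0$. The remainder is controlled by noting that, since $\hat\bbet_B\cas\bbet_0$, the increments $\mathbf x_i^T(\hat\bbet_B-\bbet_0)/s_n$ tend to zero while $\psi_1'$ is uniformly continuous with compact support (as $\rho_1$ is eventually constant), so the remainder is $o_P(1)\Vert\mathbf u_n\Vert^2$.

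For the penalty part I would split the coordinates according to the index sets $I$ (where $\beta_{0,j}\neq0$) and $II$ (where $\beta_{0,j}=0$). The contribution of the $II$ coordinates, $\lambda_n\sum_{j\in II}|u_{n,j}|^q/n^{q/2}$, is nonnegative and may simply be discarded from the lower bound. For $j\in I$ the function $|\cdot|^q$ is differentiable at $\beta_{0,j}\neq0$, so the mean value theorem gives $\lambda_n\sum_{j\in I}(|\beta_{0,j}+u_{n,j}/\sqrt n|^q-|\beta_{0,j}|^q)=O_P(\lambda_n/\sqrt n)\Vert\mathbf u_n\Vert=O_P(1)\Vert\mathbf u_n\Vert$ once $\lambda_n=O(\sqrt n)$ is used (the derivatives staying bounded because $\hat\bbet_B\cas\bbet_0$ keeps the intermediate points away from $0$). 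Combining the three pieces yields $0\ge Q_n(\hat\bbet_B)-Q_n(\bbet_0)\ge (c+o_P(1))\Vert\mathbf u_n\Vert^2-O_P(1)\Vert\mathbf u_n\Vert$, and dividing by $\Vert\mathbf u_n\Vert$ on $\{\mathbf u_n\neq 0\}$ forces $\Vert\mathbf u_n\Vert=O_P(1)$, proving (i). Part (ii) is entirely analogous: the only change is that each increment $|\beta_{0,j}+u_{n,j}/\sqrt n|^t-|\beta_{0,j}|^t$ is weighted by $|\hat\beta_{2,j}|^{-\varsigma}$, which for $j\in I$ converges in probability to $|\beta_{0,j}|^{-\varsigma}=O_P(1)$ by consistency of $\hat\bbet_2$, while the $j\in II$ weights multiply nonnegative terms and are again discarded; with $\iota_n=O(\sqrt n)$ the linear bound $O_P(1)\Vert\mathbf u_n\Vert$ is reproduced and the same squeeze applies, for every $t>0$.

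The step I expect to be the main obstacle is the rigorous control of the Taylor remainder and of the quadratic coefficient. Because [B4] grants only two moments on $\mathbf x$ and [B1] only twice continuous differentiability of $\rho_1$, one cannot bound a third derivative; instead one must exploit the compact support of $\psi_1'$ together with its uniform continuity and the almost sure vanishing of the increments $\mathbf x_i^T(\hat\bbet_B-\bbet_0)$ to show the remainder is negligible relative to $\Vert\mathbf u_n\Vert^2$, and one must verify $b(\psi_1,F_0)>0$ so that the quadratic form is genuinely coercive. It is precisely here that the previously established consistency (Theorem \ref{Cons}) is indispensable: it supplies the smallness of the increments that legitimizes the local quadratic approximation, which is then combined with the global minimizing inequality to upgrade consistency to the root-$n$ rate.
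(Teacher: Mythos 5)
Your proposal is correct and follows essentially the same route as the paper's proof: a second-order Taylor expansion of the loss around $\bbet_{0}$, a coercive quadratic lower bound via Lemmas 4.2 and 5.1 of \cite{MM 85} together with [B4], the penalty increment split into the null and non-null index sets with the nonnegative null part discarded and the Mean Value Theorem applied on the non-null part (kept away from zero by the consistency established in Theorem \ref{Cons}), and the final squeeze from the global minimizing inequality. The only cosmetic difference is that the paper writes out part (ii) and declares (i) analogous, whereas you do the reverse.
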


\begin{remark}
From now on, we will assume that the initial estimator used to define the penalty weights for the adaptive MM-Bridge estimator, $\hat{\bbet}_{2}$, is $\sqrt{n}$-consistent. For example, according to Theorem \ref{Rate}, we could take $\hat{\bbet}_{2}$ to be some MM-Bridge estimator calculated with $\lambda_n = O(\sqrt{n})$.
\end{remark}

Let $\hat{\bbet}_{A,I}$ stand for the first $s$ coordinates of $\hat{\bbet}_{A}$ and $\hat{\bbet}_{A,II}$ for the remaining $p-s$. 
Let $\hat{\bbet}_{B,I}$ stand for the first $s$ coordinates of $\hat{\bbet}_{B}$ and $\hat{\bbet}_{B,II}$ for the remaining $p-s$. The following theorem shows that, as long as $\varsigma > t-1$ and $t\leq1$, adaptive MM-Bridge estimators can be variable selection consistent, and that if $q<1$, then MM-Bridge estimators can be variable selection consistent as well. In particular, taking $t=1$, we prove the variable selection consistency of adaptive MM-Lasso estimators.

\begin{theorem}
\label{Spars}
Let $(\mathbf{x}_{i}^{\text{T}},y_{i})$, $i=1,...,n,$ be i.i.d observations with distribution $H_{0}$, which satisfies \eqref{H0}. Assume [B1]-[B4] hold.
\begin{enumerate}
\item[(i)] Suppose $q<1$, $\lambda_n=O(\sqrt{n})$ and $\lambda_n / n^{q/2} \rightarrow \infty$. 
Then \begin{equation}
\mathbb{P}\left( \hat{\bbet}_{B,II}=\mathbf{0}_{p-s}  \right) \rightarrow 1.
\nonumber
\end{equation}.
\item[(ii)] Suppose $t \leq1$, $\iota_n=O(\sqrt{n})$ and $\iota_n n^{(\varsigma - t)/2} \rightarrow \infty$. Then \begin{equation}
\mathbb{P}\left( \hat{\bbet}_{A,II}=\mathbf{0}_{p-s}  \right) \rightarrow 1. \nonumber
\end{equation}
\end{enumerate}
\end{theorem}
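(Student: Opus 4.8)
The plan is to argue coordinatewise and by contradiction, exploiting the first-order stationarity (KKT) condition that any nonzero coordinate of the estimator must satisfy. Write $\psi_1 = \rho_1'$ and $\hat{s}_n = s_n(\mathbf{r}(\hat{\bbet}_1))$, and fix an index $j\in\{s+1,\dots,p\}$, so that $\beta_{0,j}=0$. For part (i), on the event $\{\hat{\beta}_{B,j}\neq 0\}$ the penalty $\lambda_n|\beta_j|^q$ is differentiable at $\hat{\beta}_{B,j}$ and the loss is smooth everywhere by [B1], so setting the partial derivative with respect to $\beta_j$ to zero gives
\begin{equation}
\frac{1}{\hat{s}_n}\sum_{i=1}^{n}\psi_1\!\left(\frac{r_i(\hat{\bbet}_{B})}{\hat{s}_n}\right)x_{ij} = \lambda_n q\,|\hat{\beta}_{B,j}|^{q-1}\,\mathrm{sgn}(\hat{\beta}_{B,j}).
\nonumber
\end{equation}
The strategy is to bound the left-hand side from above and the right-hand side from below, and show the two are incompatible under the stated rate conditions.

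For the left-hand side I will show that it is $O_P(\sqrt{n})$. Evaluated at $(\bbet_0,s(\bbet_0))$ the sum is $\sum_i \psi_1(u_i/s(\bbet_0))x_{ij}$, which has mean zero because $\psi_1$ is odd and, by [B3], $u_i$ is symmetric and independent of $\mathbf{x}_i$; hence it is $O_P(\sqrt{n})$ by the central limit theorem, using that $\psi_1$ is bounded ([B1]) and $E_{G_0}\Vert\mathbf{x}\Vert^2<\infty$ ([B4]). The passage to the random argument $(\hat{\bbet}_B,\hat{s}_n)$ is controlled by the asymptotic-linearization argument already used in the proof of Theorem \ref{Rate}: the $\sqrt{n}$-consistency of $\hat{\bbet}_B$ and of $\hat{s}_n$ makes the first-order increment $O_P(\sqrt{n})$, while the eventual constancy of $\rho_1$ (so that $\psi_1$ has bounded support and is Lipschitz) confines the relevant summands to indices with bounded $|u_i|$, dispensing with any moment assumption on the errors. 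For the right-hand side, Theorem \ref{Rate}(i) gives $\sqrt{n}\,|\hat{\beta}_{B,j}|=O_P(1)$, so for any $\epsilon>0$ there is $M$ with $|\hat{\beta}_{B,j}|\leq M/\sqrt{n}$ on an event of probability at least $1-\epsilon$; since $q<1$, on this event $|\hat{\beta}_{B,j}|^{q-1}\geq M^{q-1}n^{(1-q)/2}$. Combining the stationarity identity with the two bounds yields
\begin{equation}
\lambda_n q\,M^{q-1}n^{(1-q)/2}\leq \left|\frac{1}{\hat{s}_n}\sum_{i=1}^{n}\psi_1\!\left(\frac{r_i(\hat{\bbet}_{B})}{\hat{s}_n}\right)x_{ij}\right|\leq M'\sqrt{n}
\nonumber
\end{equation}
on an event of probability at least $1-2\epsilon$; rearranging gives $\lambda_n/n^{q/2}\leq M'/(qM^{q-1})$, a fixed constant, which contradicts $\lambda_n/n^{q/2}\to\infty$ for $n$ large. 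Hence $\mathbb{P}(\hat{\beta}_{B,j}\neq 0)\leq 2\epsilon$ eventually; letting $\epsilon\downarrow 0$ and taking a union bound over the finitely many $j\in\{s+1,\dots,p\}$ delivers $\mathbb{P}(\hat{\bbet}_{B,II}=\mathbf{0}_{p-s})\to 1$.

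Part (ii) runs in exactly parallel fashion with the adaptive weights. On $\{\hat{\beta}_{A,j}\neq 0\}$ stationarity reads $\frac{1}{\hat{s}_n}\sum_i\psi_1(r_i(\hat{\bbet}_A)/\hat{s}_n)x_{ij} = \iota_n t\,|\hat{\beta}_{A,j}|^{t-1}|\hat{\beta}_{2,j}|^{-\varsigma}\,\mathrm{sgn}(\hat{\beta}_{A,j})$, and the left-hand side is again $O_P(\sqrt{n})$ by the same expansion, now invoking Theorem \ref{Rate}(ii). The decisive gain comes from the penalty weight: since $\hat{\bbet}_2$ is $\sqrt{n}$-consistent (see the Remark) and $\beta_{0,j}=0$, we have $|\hat{\beta}_{2,j}|\leq M/\sqrt{n}$ with high probability, whence $|\hat{\beta}_{2,j}|^{-\varsigma}\geq M^{-\varsigma}n^{\varsigma/2}$; combined with $|\hat{\beta}_{A,j}|^{t-1}\geq M^{t-1}n^{(1-t)/2}$, valid for all $t\leq 1$, the right-hand side is at least a constant multiple of $\iota_n n^{(\varsigma-t)/2}\sqrt{n}$. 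Balancing against the $O_P(\sqrt{n})$ upper bound forces $\iota_n n^{(\varsigma-t)/2}$ to remain bounded, contradicting $\iota_n n^{(\varsigma-t)/2}\to\infty$, and the same union bound finishes the proof.

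I expect the main obstacle to be the uniform control of the robust score, namely $\frac{1}{\hat{s}_n}\sum_i\psi_1(r_i(\hat{\bbet}_B)/\hat{s}_n)x_{ij}=O_P(\sqrt{n})$ evaluated at the random argument $(\hat{\bbet}_B,\hat{s}_n)$; everything else is a deterministic rearrangement of the stationarity condition followed by a finite union bound. I plan to discharge this step by reusing the expansion from the proof of Theorem \ref{Rate}, where the combination of [B1] (boundedness, Lipschitz continuity and eventual constancy of $\psi_1$ and $\psi_1'$), [B3] (symmetry, yielding a mean-zero score at $\bbet_0$), and [B4] (finite second moments of the carriers) is exactly what renders the increment from $(\bbet_0,s(\bbet_0))$ to $(\hat{\bbet}_B,\hat{s}_n)$ of order $\sqrt{n}$ with no moment condition imposed on the errors.
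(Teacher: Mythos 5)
Your proof is correct, but it takes a genuinely different route from the paper's. The paper follows Lemma 2 of \cite{Huang}: it uses Theorem \ref{Rate} to localize the estimator in a ball of radius $C/\sqrt{n}$, writes the objective as $V_n(\mathbf{u}_1,\mathbf{u}_2)$, and shows $V_n(\mathbf{u}_1,\mathbf{u}_2)-V_n(\mathbf{u}_1,\mathbf{0}_{p-s})>0$ uniformly over $0<\Vert\mathbf{u}_2\Vert$, $\Vert\mathbf{u}_1\Vert^2+\Vert\mathbf{u}_2\Vert^2\le C^2$, by bounding the loss increment by $O_P(\Vert\mathbf{u}_2\Vert)$ (two mean-value expansions) and lower-bounding the penalty increment by a constant times $\iota_n n^{(\varsigma-t)/2}\Vert\mathbf{u}_2\Vert^t$, which dominates since $t\le 1$. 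You instead argue coordinatewise through the first-order stationarity condition at any nonzero $\hat{\beta}_{B,j}$ (legitimate here because [B1] makes the loss smooth and $|\beta_j|^q$ is differentiable off zero, so the global minimizer must annihilate the partial derivative in every nonzero coordinate), and derive a contradiction between the $O_P(\sqrt{n})$ score and the exploding penalty gradient $\lambda_n q|\hat{\beta}_{B,j}|^{q-1}\gtrsim\lambda_n n^{(1-q)/2}$; this is essentially the mechanism in \cite{Fan} and \cite{Adaptive}. Both arguments lean on Theorem \ref{Rate} — the paper to localize, you to lower-bound $|\hat{\beta}_{B,j}|^{q-1}$ and $|\hat{\beta}_{2,j}|^{-\varsigma}$ and to control the score at the random argument. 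Your approach trades the paper's uniform control over a ball for a pointwise KKT identity plus the single estimate $\sum_i\psi_1(r_i(\hat{\bbet}_B)/\hat{s}_n)x_{ij}=O_P(\sqrt{n})$; the latter is exactly the computation already carried out in the proofs of Theorems \ref{Rate} and \ref{Orac} (first term $O_P(\sqrt{n})$ by Lemma 5.1 of \cite{MM 85} and the CLT, second term $O_P(n)\cdot O_P(n^{-1/2})$ by boundedness of $\psi_1'$, [B4] and the law of large numbers), so the step you flag as the main obstacle is indeed discharged by the machinery the paper already provides. The paper's route would remain valid if the loss were not differentiable; yours is shorter and makes the role of the rate conditions $\lambda_n/n^{q/2}\to\infty$ and $\iota_n n^{(\varsigma-t)/2}\to\infty$ more transparent.
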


Next we derive the asymptotic distribution of $\hat{\bbet}_{B,I}$ and $\hat{\bbet}_{A,I}$.

\begin{theorem}
\label{Orac}
Let $(\mathbf{x}_{i}^{\text{T}},y_{i})$, $i=1,...,n,$ be i.i.d observations with distribution $H_{0}$, which satisfies \eqref{H0}. Assume [B1]-[B4] hold.
\begin{enumerate}
\item[(i)] Suppose $q<1$, $\lambda_{n}/\sqrt{n}\rightarrow 0$ and $\lambda_{n}/n^{q/2}\rightarrow \infty$. Then
\begin{equation}
\sqrt{n}(\hat{\bbet}_{B,I}-\boldsymbol{\beta}_{0,I}) \cw \mathit{N}_{s}\left(\mathbf{0},s(\bbet_0)^2\frac{a(\psi,F_{0})}{b(\psi,F_{0})^{2}}V_{\mathbf{x}_I}^{-1}\right).
\nonumber
\end{equation}
\item[(ii)] Suppose $t\leq1$, $\iota_{n}/\sqrt{n}\rightarrow 0$ and $\iota_{n} n^{(\varsigma-t)/2}\rightarrow \infty$.  Then
\begin{equation}
\sqrt{n}(\hat{\bbet}_{A,I}-\boldsymbol{\beta}_{0,I}) \cw \mathit{N}_{s}\left(\mathbf{0},s(\bbet_0)^2\frac{a(\psi,F_{0})}{b(\psi,F_{0})^{2}}V_{\mathbf{x}_I}^{-1}\right).
\nonumber
\end{equation}
\end{enumerate}
Here $a(\psi,F_{0})$ and $b(\psi,F_{0})$ are as in \eqref{distasin MM} and $V_{\mathbf{x}_I}=E_{G_{0}} \mathbf{x}_{I}\mathbf{x}_{I}^{T}$.
\end{theorem}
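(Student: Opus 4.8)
The plan is to prove both statements by showing that, with probability tending to one, each estimator is supported on the oracle submodel $\{\bbet_{II}=\mathbf 0_{p-s}\}$ and there solves a smooth estimating equation whose penalty contribution is asymptotically negligible; asymptotic normality then follows from a standard $M$-estimation expansion. The essential inputs are the $\sqrt n$-rate of Theorem \ref{Rate} and the variable selection consistency of Theorem \ref{Spars}. Since $\rho_1$ is a bounded, hence non-convex, $\rho$-function, the convex argmin arguments available for least-squares or least-absolute-deviation Bridge estimators cannot be used; the two a priori results play the role that convexity would otherwise play. I give the argument for part (i); part (ii) is identical except for the form of the penalty derivative.

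Write $\hat s_n=s_n(\mathbf r(\hbbet_1))$, $s_0=s(\bbet_0)$, $\psi_1=\rho_1'$, and let $\mathbf x_{i,I}$ be the first $s$ coordinates of $\mathbf x_i$. On the event $\{\hbbet_{B,II}=\mathbf 0_{p-s}\}$, which has probability tending to one by Theorem \ref{Spars}(i), the free coordinates $\hbbet_{B,I}$ lie within $O_P(1/\sqrt n)$ of $\bbet_{0,I}$ by Theorem \ref{Rate}(i); since every entry of $\bbet_{0,I}$ is nonzero, these coordinates are interior to the region where $\beta\mapsto|\beta|^q$ is differentiable, so $\hbbet_{B,I}$ satisfies
\begin{equation}
\frac{1}{\hat s_n}\sum_{i=1}^n\psi_1\!\left(\frac{r_i(\hbbet_B)}{\hat s_n}\right)\mathbf x_{i,I}=\lambda_n q\,\mathbf g(\hbbet_{B,I}),\nonumber
\end{equation}
where $\mathbf g(\bbet_I)$ has $j$-th entry $\mathrm{sgn}(\beta_j)|\beta_j|^{q-1}$. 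Setting $\boldsymbol\delta_n=\sqrt n(\hbbet_{B,I}-\bbet_{0,I})$ and substituting $r_i(\hbbet_B)=u_i-\mathbf x_{i,I}^{T}\boldsymbol\delta_n/\sqrt n$, a first-order Taylor expansion of the score in $\boldsymbol\delta_n$, legitimate because [B1] makes $\psi_1$ and $\psi_1'$ bounded and continuous, gives after division by $\sqrt n$
\begin{equation}
\frac{1}{s_0\sqrt n}\sum_{i=1}^n\psi_1\!\left(\frac{u_i}{s_0}\right)\mathbf x_{i,I}-\frac{b(\psi_1,F_0)}{s_0^2}\,\mathbf V_{\mathbf x_I}\,\boldsymbol\delta_n+o_P(1)=\frac{\lambda_n q}{\sqrt n}\,\mathbf g(\hbbet_{B,I}).\nonumber
\end{equation}

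Next I would verify that every non-leading piece vanishes. The right-hand side is $o_P(1)$ because $\lambda_n/\sqrt n\to 0$ and $\mathbf g(\hbbet_{B,I})\to\mathbf g(\bbet_{0,I})$ is finite. Replacing $\hat s_n$ by $s_0$ in the score costs only $o_P(1)$: the scale contributes $(\hat s_n-s_0)$ times $n^{-1/2}\sum_i\psi_1'(u_i/s_0)(u_i/s_0^2)\mathbf x_{i,I}$, a sum of mean-zero summands (by the symmetry of $F_0$ in [B3] and the oddness of $\psi_1$) that is therefore $O_P(1)$, so multiplied by $\hat s_n-s_0=O_P(1/\sqrt n)$ it is $o_P(1)$; this is precisely where [B3] enters. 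The central limit theorem, using the independence of $u_i$ and $\mathbf x_i$ in [B4] and $E_{F_0}\psi_1(u/s_0)=0$, gives
\begin{equation}
\frac{1}{s_0\sqrt n}\sum_{i=1}^n\psi_1\!\left(\frac{u_i}{s_0}\right)\mathbf x_{i,I}\cw N_s\!\left(\mathbf 0,\frac{a(\psi_1,F_0)}{s_0^2}\mathbf V_{\mathbf x_I}\right),\nonumber
\end{equation}
while the law of large numbers confirms the quadratic coefficient $b(\psi_1,F_0)s_0^{-2}\mathbf V_{\mathbf x_I}$. Solving the expansion for $\boldsymbol\delta_n$ and applying Slutsky's lemma yields $\boldsymbol\delta_n\cw N_s(\mathbf 0,\,s_0^2\,a(\psi_1,F_0)b(\psi_1,F_0)^{-2}\mathbf V_{\mathbf x_I}^{-1})$, the asserted limit. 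For part (ii) the only change is that the penalty derivative has $j$-th entry $\iota_n t\,\mathrm{sgn}(\hat\beta_{A,j})|\hat\beta_{A,j}|^{t-1}/|\hat\beta_{2,j}|^{\varsigma}$, which divided by $\sqrt n$ is $o_P(1)$ because $\iota_n/\sqrt n\to 0$ and $\hbbet_{2,I}\to\bbet_{0,I}$ has only nonzero entries; the same expansion and limit then hold with $\hbbet_A$ in place of $\hbbet_B$, the sparsity now coming from Theorem \ref{Spars}(ii).

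The main obstacle is not the algebra leading to the stated covariance but the two uniformity issues underlying the expansion. First, one must justify replacing the exact score $n^{-1/2}\sum_i\psi_1(r_i(\hbbet_B)/\hat s_n)\mathbf x_{i,I}$ by its linearization uniformly over $\boldsymbol\delta_n$ in a bounded set; this requires a stochastic equicontinuity argument for the empirical process indexed by $\bbet$ in an $O(1/\sqrt n)$ neighborhood of $\bbet_0$, which is available because [B1] provides a smooth, bounded $\psi_1$ and [B4] a finite, nonsingular $\mathbf V_{\mathbf x}$. Second, one must show the estimated scale is asymptotically irrelevant, which rests entirely on the symmetry in [B3]. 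These are the points where the real work lies; once they are in place, Slutsky's lemma delivers the conclusion.
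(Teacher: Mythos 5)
Your proposal is correct and follows essentially the same route as the paper's proof: use Theorem \ref{Spars} to restrict to the oracle submodel, use consistency to ensure the first $s$ coordinates stay away from zero so the first-order conditions hold there, linearize the score around $\bbet_{0,I}$, kill the penalty term via $\lambda_n/\sqrt n\to 0$ (resp. $\iota_n/\sqrt n\to 0$), and conclude by the CLT, the LLN and Slutsky. The two ``uniformity issues'' you flag at the end are exactly the points the paper disposes of by writing the remainder in exact Mean Value Theorem form and invoking Lemmas 4.2 and 5.1 of Yohai (1985) together with Lemma \ref{Lemma S unif}, which is the same resolution you sketch.
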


Theorem \ref{Spars} together with Theorem \ref{Orac} prove that $\hat{\bbet}_{A}$ and $\hat{\bbet}_{B}$ can have the \textit{oracle property} as long as $\varsigma>t-1$ and $t\leq1$, and $q<1$ respectively. That is: the estimated coefficients corresponding to null coordinates of the true regression parameter are set to zero with probability tending to 1, while at the same time the coefficients corresponding to non-null coordinates of the true regression parameter are estimated with the same asymptotic efficiency as if we had applied a non penalized MM-estimators to the relevant carriers only.

In Theorem \ref{Dist-Asin Knight} we derive the asymptotic distribution of 
$\hat{\bbet}_{B}$ for $q\geq1$. Our theorem is analogous to Theorem 2 of \cite{Knight}.

\begin{theorem}
\label{Dist-Asin Knight}
Let $(\mathbf{x}_{i}^{\text{T}},y_{i})$, $i=1,...,n,$ be i.i.d observations
with distribution $H_{0}$, which satisfies \eqref{H0}.  Let $q\geq 1 $. Assume [B1]-[B4] hold and $\lambda_n/\sqrt{n}\rightarrow \lambda_0$. Then
\begin{equation}
\sqrt{n}(\hat{\bbet}_{B}-\boldsymbol{\beta}_{0}) \cw \arg\min(R),
\nonumber
\end{equation}
where
\begin{equation}
R(\mathbf{z})=-\mathbf{z}^{T}\mathbf{W}+\frac{1}{2s(\bbet_{0})^{2}}b(\psi_0,F_{0})\mathbf{z}^{T}V_{\mathbf{x}}\mathbf{z}+\lambda_{0} q \sum\limits_{i=1}^{p} z_{j} sgn(\beta_{0,j}) \vert \beta_{0,j} \vert^{q-1},
\nonumber
\end{equation}
for $q>1$,
\begin{align*}
R(\mathbf{z})=-\mathbf{z}^{T}\mathbf{W}+\frac{1}{2s(\bbet_{0})^{2}}b(\psi_0,F_{0})\mathbf{z}^{T}V_{\mathbf{x}}\mathbf{z}+\lambda_{0}  \sum\limits_{i=1}^{p} ( z_{j} sgn(\beta_{0,j}) I(\beta_{0,j}\neq0)\\+ \vert z_{j} \vert I(\beta_{0,j} = 0) ),
\end{align*}
for $q=1$ and $\mathbf{W} \sim \mathit{N}_{p}\left(\mathbf{0},a(\psi_0,F_{0}) / s(\boldsymbol{\beta}_{0})^2 V_{\mathbf{x}}\right)$.
\end{theorem}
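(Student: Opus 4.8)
The plan is to follow the reparametrization strategy of \citet{Knight}, adapted to the non-convex MM-loss, and then transfer convergence of the objective to its minimizer. Write $\hat{s}=s_{n}(\mathbf{r}(\hat{\bbet}_{1}))$ for the preliminary scale, let $\psi_{1}=\rho_{1}'$, and reparametrize through $\mathbf{z}=\sqrt{n}(\bbet-\bbet_{0})$, so that $r_{i}(\bbet_{0}+\mathbf{z}/\sqrt{n})=u_{i}-\mathbf{x}_{i}^{T}\mathbf{z}/\sqrt{n}$. Define the centered, rescaled objective
\[
V_{n}(\mathbf{z})=\sum_{i=1}^{n}\left[\rho_{1}\!\left(\frac{u_{i}-\mathbf{x}_{i}^{T}\mathbf{z}/\sqrt{n}}{\hat{s}}\right)-\rho_{1}\!\left(\frac{u_{i}}{\hat{s}}\right)\right]+\lambda_{n}\left(\Vert\bbet_{0}+\mathbf{z}/\sqrt{n}\Vert_{q}^{q}-\Vert\bbet_{0}\Vert_{q}^{q}\right),
\]
which is minimized exactly at $\hat{\mathbf{z}}_{n}=\sqrt{n}(\hat{\bbet}_{B}-\bbet_{0})$. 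By Theorem \ref{Rate}, $\hat{\mathbf{z}}_{n}=O_{P}(1)$, so the problem is localized and it suffices to understand $V_{n}$ on compact sets of $\mathbf{z}$. The goal is to show $V_{n}\cw R$ uniformly on compacta.

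The next step is to expand the loss. A second-order Taylor expansion of $\rho_{1}$ around $u_{i}/\hat{s}$ produces a linear term $-\mathbf{z}^{T}\mathbf{W}_{n}$ with $\mathbf{W}_{n}=(\hat s)^{-1}n^{-1/2}\sum_{i}\psi_{1}(u_{i}/\hat s)\mathbf{x}_{i}$, a quadratic term $\frac{1}{2\hat{s}^{2}}\mathbf{z}^{T}(n^{-1}\sum_{i}\psi_{1}'(u_{i}/\hat s)\mathbf{x}_{i}\mathbf{x}_{i}^{T})\mathbf{z}$, and a remainder that is $o_{P}(1)$ uniformly over compact $\mathbf{z}$, controlled by the uniform continuity and boundedness of $\psi_{1}'$ granted by [B1] together with $E\Vert\mathbf{x}\Vert^{2}<\infty$ from [B4] (via $\max_{i}\Vert\mathbf{x}_{i}\Vert=o_{P}(\sqrt{n})$). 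Here the symmetry assumption [B3] is essential: since $\psi_{1}$ is odd, $f_{0}$ is even and $u$ is independent of $\mathbf{x}$, we have $E\psi_{1}(u/s)\mathbf{x}=\mathbf{0}$ for every $s$; writing $\mathbf{W}_{n}$ as a function of $\hat s$ and expanding around $s(\bbet_{0})$, this centering forces the contribution of the scale-estimation error $\hat s-s(\bbet_{0})=O_{P}(n^{-1/2})$ to be $o_{P}(1)$, so the limiting law does not depend on the preliminary scale. A central limit theorem then gives $\mathbf{W}_{n}\cw\mathbf{W}\sim N_{p}(\mathbf{0},a(\psi_{1},F_{0})/s(\bbet_{0})^{2}V_{\mathbf{x}})$, while the law of large numbers and $\hat s\cas s(\bbet_{0})$ give $n^{-1}\sum_{i}\psi_{1}'(u_{i}/\hat s)\mathbf{x}_{i}\mathbf{x}_{i}^{T}\cp b(\psi_{1},F_{0})V_{\mathbf{x}}$ by independence of $u$ and $\mathbf{x}$, yielding the middle term of $R$ (with $\psi_{1}=\rho_{1}'$ the score of the loss, in agreement with \eqref{distasin MM}).

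The penalty contribution is deterministic given $\mathbf{z}$, so I would treat it coordinatewise. For $\beta_{0,j}\neq 0$ a first-order expansion gives $\lambda_{n}(\vert\beta_{0,j}+z_{j}/\sqrt{n}\vert^{q}-\vert\beta_{0,j}\vert^{q})=(\lambda_{n}/\sqrt{n})q\,\mathrm{sgn}(\beta_{0,j})\vert\beta_{0,j}\vert^{q-1}z_{j}+o(1)\to\lambda_{0}q\,\mathrm{sgn}(\beta_{0,j})\vert\beta_{0,j}\vert^{q-1}z_{j}$. For $\beta_{0,j}=0$ the contribution equals $(\lambda_{n}/\sqrt{n})\vert z_{j}\vert^{q}n^{-(q-1)/2}$, which vanishes when $q>1$ but tends to $\lambda_{0}\vert z_{j}\vert$ when $q=1$. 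Summing over $j$ recovers exactly the two displayed forms of the penalty term in $R$, for $q>1$ and $q=1$ respectively, and combining with the previous paragraph establishes $V_{n}(\mathbf{z})\cw R(\mathbf{z})$ uniformly on compacta, with the random linear coefficient $\mathbf{W}_{n}$ and the deterministic-limit quadratic coefficient handled jointly by Slutsky.

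Finally, since $V_{\mathbf{x}}$ is nonsingular ([B4]) and $b(\psi_{1},F_{0})>0$, the quadratic part of $R$ is strictly convex, and adding the convex ($q\geq1$) penalty makes $R$ strictly convex and coercive, hence possessed of a unique minimizer, which is $O_{P}(1)$. Together with the tightness of $\hat{\mathbf{z}}_{n}$ and the uniform convergence on compacta, the argmin continuous-mapping theorem gives $\hat{\mathbf{z}}_{n}\cw\arg\min R$, as claimed. I expect the main obstacle to be precisely the non-convexity of the MM-loss: unlike the least-squares setting of \citet{Knight}, the redescending $\rho_{1}$ makes $V_{n}$ non-convex, so the convexity lemma cannot be invoked directly to pass from pointwise to argmin convergence. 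I would circumvent this by exploiting the already-established $\sqrt{n}$-consistency (Theorem \ref{Rate}) to confine $\hat{\mathbf{z}}_{n}$ to a compact set, on which the bounded derivatives from [B1] and the moment condition [B4] deliver the required uniform quadratic approximation; the secondary technical point, resolved by the symmetry argument above, is showing that plugging in the estimated scale $\hat s$ leaves the asymptotic distribution unchanged.
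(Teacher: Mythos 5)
Your proposal is correct and follows essentially the same route as the paper's proof: reparametrize by $\mathbf{z}=\sqrt{n}(\bbet-\bbet_{0})$, use the $\sqrt{n}$-consistency of Theorem \ref{Rate} to localize to compact sets, Taylor-expand the loss to extract the linear term (handled by the CLT and the symmetry-based lemma showing the estimated scale is asymptotically innocuous) and the quadratic term, compute the coordinatewise limits of the penalty for $\beta_{0,j}\neq 0$ and $\beta_{0,j}=0$, and conclude with an argmin continuous-mapping theorem (the paper invokes Theorems 2.3 and 2.7 of Kim and Pollard, verifying finite-dimensional convergence plus stochastic equicontinuity, which is exactly the content of your uniform control of the Taylor remainder). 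Your aside that $\hat{s}-s(\bbet_{0})=O_{P}(n^{-1/2})$ is neither established nor needed — consistency of $\hat{s}$ together with the symmetry-induced centering already suffices, as your own argument shows — and your use of $\psi_{1}$ rather than the $\psi_{0}$ appearing in the theorem statement matches what the proof actually delivers.
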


Note that if $\lambda_0=0$, $\hat{\boldsymbol{\beta}}_{B}$ has the same asymptotic distribution as the corresponding non-penalized MM-estimator. If $\lambda_0>0$ and $q=1$, the coordinates of $\hat{\bbet}_{B}$ corresponding to null coefficients of $\bbet_{0}$ will be set to zero with positive probability, the proof is essentially the same as the one that appears in pages 1361-1362 of \cite{Knight}. However, one can show that
\begin{equation}
\limsup \mathbb{P}\left(\hat{\boldsymbol{\beta}}_{B,II} = \mathbf{0}_{p-s} \right) \leq c <1,
\nonumber
\end{equation}
where $c$ depends on $\mathbf{V}_{x}$, $\lambda_0$ and $\bbet_{0}$. The proof is essentially the same as the proof of Proposition 1 of \cite{Adaptive}.

If $q>1$ the amount of shrinkage of the estimated regression coefficients increases with the magnitude of the true regression coefficients. Hence, for "large" parameters, the bias introduced by MM-Bridge estimators with $q>1$ may be unacceptably large, at least for the fixed $p$ scenario.  For the case $q=2$ we can calculate the asymptotic distribution of the estimator explicitly. It follows easily from Theorem \ref{Dist-Asin Knight} that the asymptotic distribution of the MM-Ridge estimator is
\begin{equation}
\mathit{N}_{p}\left(-2\lambda_0\frac{s(\boldsymbol{\beta}_{0})^2}{b(\psi_1,F_{0})}V_{\mathbf{x}}^{-1} \bbet_0, s(\boldsymbol{\beta}_{0})^2 \frac{a(\psi_1,F_{0})}{b(\psi_1,F_{0})^2} V_{\mathbf{x}}^{-1}\right).
\nonumber
\end{equation}

In the next theorem we derive the asymptotic distribution of 
$\hat{\bbet}_{B}$ for $q<1$ when $\lambda_n/n^{q/2}\rightarrow \lambda_0$. 

\begin{theorem}
\label{Dist-Asin Knight2}
Let $(\mathbf{x}_{i}^{\text{T}},y_{i})$, $i=1,...,n,$ be i.i.d observations
with distribution $H_{0}$, which satisfies \eqref{H0}.  Let $q< 1 $. Assume [B1]-[B4] hold and $\lambda_n/n^{q/2}\rightarrow \lambda_0$. Then
\begin{equation}
\sqrt{n}(\hat{\bbet}_{B}-\boldsymbol{\beta}_{0}) \cw \arg\min(R),
\nonumber
\end{equation}
where
\begin{equation}
R(\mathbf{z})=-\mathbf{z}^{T}\mathbf{W}+\frac{1}{2s(\bbet_{0})^{2}}b(\psi_0,F_{0})\mathbf{z}^{T}V_{\mathbf{x}}\mathbf{z}+\lambda_{0}  \sum\limits_{i=1}^{p}  \vert z_{j} \vert^{q}I(\beta_{0,j}=0),
\nonumber
\end{equation}
and $\mathbf{W} \sim \mathit{N}_{p}\left(\mathbf{0},a(\psi_0,F_{0})/ s(\boldsymbol{\beta}_{0})^2 V_{\mathbf{x}}\right)$.
\end{theorem}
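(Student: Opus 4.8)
The plan is to follow the reparametrize-and-minimize (epi-convergence) strategy already used for Theorem \ref{Dist-Asin Knight}, tracking how the $\ell_q$ penalty with $q<1$ reshapes the limit. First I would set $\mathbf{z}=\sqrt{n}(\bbet-\bbet_{0})$ and define the recentred objective
\[
V_n(\mathbf{z})=\sum_{i=1}^n\left[\rho_1\left(\frac{u_i-\mathbf{x}_i^{T}\mathbf{z}/\sqrt{n}}{\hat{s}}\right)-\rho_1\left(\frac{u_i}{\hat{s}}\right)\right]+\lambda_n\left(\Vert\bbet_{0}+\mathbf{z}/\sqrt{n}\Vert_q^q-\Vert\bbet_{0}\Vert_q^q\right),
\]
where $\hat{s}=s_n(\mathbf{r}(\hat{\bbet}_{1}))$, so that $\sqrt{n}(\hat{\bbet}_{B}-\bbet_{0})=\arg\min_{\mathbf{z}}V_n(\mathbf{z})$. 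I would then write $V_n=L_n+P_n$, the loss increment plus the penalty increment, and analyse the two pieces separately.

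For the loss part $L_n$ I would Taylor-expand $\rho_1$ to second order; assumption [B1] guarantees that $\psi_1=\rho_1'$ and $\psi_1'$ are bounded and that the remainder is negligible uniformly on compact sets of $\mathbf{z}$. Using $\hat{s}\cas s(\bbet_{0})$, the linear term equals $-\mathbf{z}^{T}\,\hat{s}^{-1}n^{-1/2}\sum_i\psi_1(u_i/\hat{s})\mathbf{x}_i$ and the quadratic term equals $\tfrac12\,\hat{s}^{-2}n^{-1}\sum_i\psi_1'(u_i/\hat{s})(\mathbf{x}_i^{T}\mathbf{z})^2$. Assumption [B3] enters twice here. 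Because $\psi_1$ is odd and $F_0$ is symmetric, $E_{F_0}\psi_1(u/s(\bbet_{0}))=0$, so with $u_i$ independent of $\mathbf{x}_i$ the central limit theorem yields $n^{-1/2}\sum_i\psi_1(u_i/s(\bbet_{0}))\mathbf{x}_i\cw N_p(\mathbf{0},a(\psi_1,F_0)\mathbf{V}_{\mathbf{x}})$; and the first-order correction produced by replacing $\hat{s}$ with $s(\bbet_{0})$ is asymptotically negligible, since it is governed by $E_{F_0}[\psi_1'(u/s(\bbet_{0}))\,u/s(\bbet_{0})]=0$, again by oddness, so that estimation of the scale does not perturb the first-order limit. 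A law of large numbers together with $u_i\perp\mathbf{x}_i$ then gives $L_n(\mathbf{z})\cw-\mathbf{z}^{T}\mathbf{W}+\tfrac{1}{2s(\bbet_{0})^{2}}b(\psi_1,F_0)\mathbf{z}^{T}\mathbf{V}_{\mathbf{x}}\mathbf{z}$ with $\mathbf{W}\sim N_p(\mathbf{0},a(\psi_1,F_0)/s(\bbet_{0})^2\,\mathbf{V}_{\mathbf{x}})$, the relevant functionals being those of $\psi_1$ since the loss is built from $\rho_1$.

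For the penalty part $P_n$ I would split the sum according to whether $\beta_{0,j}=0$. For the $s$ nonzero coordinates a first-order expansion of $|\cdot|^q$ contributes a term of order $\lambda_n/\sqrt{n}=(\lambda_n/n^{q/2})\,n^{(q-1)/2}$, which vanishes precisely because $q<1$ forces $n^{(q-1)/2}\to0$; this is where the hypothesis $q<1$ is decisive. For the zero coordinates, $\lambda_n|z_j/\sqrt{n}|^q=(\lambda_n/n^{q/2})|z_j|^q\to\lambda_0|z_j|^q$. Hence $P_n(\mathbf{z})\to\lambda_0\sum_{j}|z_j|^qI(\beta_{0,j}=0)$ deterministically, and combining with the loss analysis gives the finite-dimensional convergence $V_n\cw R$.

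The main obstacle is passing from convergence of the objectives to convergence of their minimizers, because for $q<1$ the limiting penalty $\sum_j|z_j|^qI(\beta_{0,j}=0)$ is non-convex and non-differentiable, so the convex-argmin continuous-mapping argument available for $q\geq1$ in Theorem \ref{Dist-Asin Knight} cannot be used. I would instead invoke epi-convergence in distribution: (a) upgrade the pointwise convergence to epi-convergence of $V_n$ to $R$, which is feasible here because the loss part is asymptotically convex with a remainder controlled uniformly on compacts by [B1] while the penalty part converges without oscillation; (b) obtain tightness of the minimizers directly from Theorem \ref{Rate}(i), which gives $\arg\min V_n=\sqrt{n}(\hat{\bbet}_{B}-\bbet_{0})=O_P(1)$; and (c) verify that $R$ is coercive (its quadratic part dominates), lower semicontinuous, and admits an almost surely unique minimizer, the realizations of $\mathbf{W}$ producing ties forming a null set. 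The argmin theorem for epi-convergent random functions then yields $\arg\min V_n\cw\arg\min R$, which is the assertion; the same device underlies the corresponding result of \cite{Knight}. Verifying the epi-convergence in (a) and, especially, the almost-sure uniqueness in (c) in the presence of the non-convex penalty is the crux of the argument and the genuine point of departure from the $q\geq1$ case.
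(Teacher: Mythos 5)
Your proposal is correct, and its analytical core --- the second-order Taylor expansion of the loss, the use of symmetry of $F_0$ and oddness of $\psi_1$ to kill both the centering and the scale-estimation correction, and above all the split of the penalty increment into the nonzero coordinates (which contribute $O(\lambda_n/\sqrt{n})=(\lambda_n/n^{q/2})n^{(q-1)/2}\to 0$ precisely because $q<1$) and the zero coordinates (which contribute $(\lambda_n/n^{q/2})|z_j|^q\to\lambda_0|z_j|^q$) --- is exactly the paper's computation; indeed the paper's entire written proof consists of recording that uniform-on-compacts limit of the penalty and then referring back to Theorem \ref{Dist-Asin Knight}. Where you diverge is the final passage from objectives to minimizers. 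The paper does not use a convexity argument for $q\geq 1$: it proves weak convergence of $R_n$ to $R$ in $\ell^{\infty}(K)$ for every compact $K$ (finite-dimensional convergence plus stochastic equicontinuity, via Theorem 2.3 of Kim and Pollard) and then applies the argmin continuous-mapping theorem (Theorem 2.7 of Kim and Pollard) together with the $O_P(1)$ tightness from Theorem \ref{Rate}; that machinery is indifferent to convexity, which is exactly why the $q<1$ case goes through verbatim and why your claimed ``genuine point of departure'' is not actually one. Your substitute --- epi-convergence in distribution \`a la Knight and Fu --- is a perfectly valid alternative and arguably more natural for lower-semicontinuous non-convex penalties, but it buys nothing here beyond what uniform convergence on compacts already delivers. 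Two further remarks: both routes quietly require that $\arg\min R$ be almost surely unique, which neither the paper nor you actually verifies (the standard argument is that $\mathbf{W}\mapsto\min_{\mathbf{z}}R(\mathbf{z})$ is concave, hence differentiable Lebesgue-a.e., and $\mathbf{W}$ has a density since $\mathbf{V}_{\mathbf{x}}$ is nonsingular; at points of differentiability the minimizer is unique) --- you at least flag the issue; and your observation that the limiting functionals should involve $\psi_1$ rather than the $\psi_0$ appearing in the theorem statement is correct, the latter being a typo carried over from Theorem \ref{Dist-Asin Knight}.
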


It follows from Theorem \ref{Dist-Asin Knight2} that for $q<1$, if $\lambda_0>0$, the coordinates of $\hat{\bbet}_{B}$ corresponding to null coefficients of $\bbet_{0}$ will be set to zero with positive probability. Moreover, in this case the shrinkage only affects the coordinates of the estimators corresponding to null coefficients of $\bbet_0$, and hence no asymptotic bias is introduced.

\subsection{Computation}
\label{sec-comp}

In this section, we describe an algorithm to obtain approximate solutions of \eqref{p-MM} for $q=1$, i.e. MM-Lasso estimators. Through out this section we will assume that our model, \eqref{ML}, contains an intercept, and that the first coordinate of each $\mathbf{x}_i$ equals 1. Let $\mathbf{X}$ be the matrix with $\mathbf{x}_i$ as rows. 

Prior to any calculations, all the columns of $\mathbf{X}$, except the first one, are centered and scaled using the median and the normalized median absolute deviation respectively. The response vector $\mathbf{y}$ is centered using the median. At the end, the final estimates are expressed in the original coordinates. 

We take the S-Ridge estimator of \cite{Maronna}, which we note $\hat{\bbet}_{PS}$, as the initial estimate in \eqref{p-MM}. The penalization parameter for S-Ridge estimator, $\gamma_n$, is chosen via robust 5-fold cross-validation, as described in \cite{Maronna}. Let $s_n=s_{n}(\mathbf{r}(\hat{\bbet}_{PS}))$. 

Let $w(u)=\psi_1(u)/u$, where $\psi_1$ is the derivative of $\rho_1$. For a given $\bbet$, let $\omega_i=w(r_i(\bbet)/s_n)$. Suppose $\lambda_n$ is given.  and let $\mathbf{W}$ be the diagonal matrix formed by $\sqrt{\omega_1},..., \sqrt{\omega_n}$. Let $\mathbf{y}^{*}=\mathbf{W}\mathbf{y}$ and $\mathbf{X}^{*}=\mathbf{W}\mathbf{X}$. Let $\hat{\bbet}_{B}$ be the MM-Lasso estimator. It is easy to show that $\hat{\bbet}_{B}$ satisfies
\begin{equation}
\mathbf{X}^{*T}(\mathbf{y}^{*}-\mathbf{X}^{*}\bbet)+\lambda_n s_{n}^{2} \icol{0\\ sign(\beta_2)\\\vdots\\\ sign(\beta_{p+1})} \overset{s}{=} \mathbf{0}_{p+1},
\nonumber
\end{equation}
where $\overset{s}{=} \mathbf{0}_{p+1}$ stands for a change of sign.
Note that the first column of $\mathbf{X}^{*}$ equals $\mathbf{k}^{*}=(\sqrt{\omega_1},..., \sqrt{\omega_n})$. For each $j=2,...,p+1$ let $\mathbf{x}^{*(j)}$ be the $j$-th column of $\mathbf{X}^{*}$ and let
\begin{equation}
\eta_j=\frac{\mathbf{k}^{*T}\mathbf{x}^{*(j)}}{\Vert\mathbf{k}^{*}\Vert^{2} }.
\nonumber
\end{equation}
Then $\mathbf{x}^{*(j)}$ can be decomposed as the sum of two vectors: $\eta_j \mathbf{k}^{*}$, in the direction of $\mathbf{k}^{*}$, and $\mathbf{x}^{*\perp (j)}=\mathbf{x}^{*(j)}-\eta_j \mathbf{k}^{*}$, orthogonal to $\mathbf{k}^{*}$. Let $\mathbf{X}^{*\perp}$ be the matrix with columns $\mathbf{x}^{*\perp (2)},...,\mathbf{x}^{*\perp (p+1)}$. It is easy to show that $\hat{\bbet}_{B}$ satisfies
\begin{align}
\mathbf{k}^{*}\mathbf{y}^{*}-\Vert\mathbf{k}^{*}\Vert^{2}(\beta_1+\eta_2\beta_2+...+\eta_{p+1} \beta_{p+1})=0\label{wlasso-eq1} \\
\mathbf{X}^{*\perp T}(\mathbf{y}^{*}-\mathbf{X}^{*\perp T}\bbet)+\lambda_n s_{n}^{2}\icol{0\\ sign(\beta_2)\\\vdots\\\ sign(\beta_{p+1})} \overset{s}{=}  \mathbf{0}_{p+1}.\label{wlasso-eq2}
\end{align}

We note that if $\mathbf{k}^{*}, \mathbf{y}^{*}$ and $\mathbf{X}^{*\perp T}$ where known, $\hat{\bbet}_{B,2}$,..., $\hat{\bbet}_{B,p+1}$ could be estimated by solving equation \eqref{wlasso-eq2} using some algorithm to solve Lasso-type problems, e.g. the LARS procedure or Coordinate Descent Optimization, without including an intercept. Then $\hat{\bbet}_{B,1}$ could be solved easily from \eqref{wlasso-eq1}.

The fact that $\mathbf{k}^{*}, \mathbf{y}^{*}$ and $\mathbf{X}^{*\perp T}$ depend on $\hat{\bbet}_{B}$ suggests an iterative procedure, as is usual in robust statistics. Starting from $\hat{\bbet}_{PS}$ we iteratively solve equation \eqref{wlasso-eq2} using the LARS algorithm without including an intercept and then solve for the intercept in \eqref{wlasso-eq1}. Call $\bbet^{(i)}$ the estimate at the $i$-th iteration. Convergence is declared when 
\begin{equation}
\frac{\Vert\bbet^{(i+1)}-\bbet^{(i)} \Vert }{\Vert \bbet^{(i)} \Vert}\leq \delta,
\nonumber
\end{equation}
where $\delta$ is some fixed tolerance parameter. In our simulations we took $\delta=10^{-4}$.

Regarding the calculation of adaptive MM-Lasso estimators, we note that solving \eqref{p-AMM} is equivalent to solving
\begin{equation}
\hat{\bbet}=\arg\min_{\bbet\in\mathbb{R}^{p}%
}\sum\limits_{i=1}^{n}\rho_{1}\left(  \frac{y_i-\hat{\mathbf{x}}^{T}_{i} \bbet}{s_{n}}\right)+\iota_n\Vert\boldsymbol{\beta}\Vert_{1},
\nonumber
\end{equation}
where $\hat{\mathbf{x}}_{i,j}=\mathbf{x}_{i,j} \vert \hat{\beta}_{2,j} \vert^{\varsigma}$ for $j=1,...,p$ and taking $\hat{\beta}_{A,j}=\hat{\beta}_{j} \vert \hat{\beta}_{2,j} \vert^{\varsigma}$. Hence, our procedure to calculate MM-Lasso estimators can be used to calculate adaptive MM-Lasso estimators, simply applying the routine to the data with weighed carriers.
To calculate our proposed adaptive MM-Lasso estimator, we take $\hat{\bbet}_{1}=\hat{\bbet}_{PS}$,  $\hat{\bbet}_{2}=\hat{\bbet}_{B}$ and $\varsigma=1$.

In practice, we chose the $\rho$-functions used to calculate the initial S-Ridge estimator, the MM-Lasso estimator and the adaptive MM-Lasso estimator of the form $\rho_0=\rho_{c_0}^{B}$ and $\rho_1=\rho_{c_1}^{B}$ where $c_1 \geq c_0$ and $\rho_{c}^{B}$ is as in \eqref{rho-marina}. The tuning constants $c_0$ and $c_1$ are chosen as in \cite{Maronna}.

The penalization parameter for $\hat{\bbet}_{B}$, $\lambda_n$, is chosen over a set of candidates via robust 5-fold cross validation, using a $\tau$-scale of the residuals as the objective function. The $\tau$-scale was introduced by \cite{tau 88} to measure in a robust and efficient way the largeness of the residuals
in a regression model. The set of candidate lambdas is taken as 30 equally spaced points between 0 and $\lambda_{max}$, where $\lambda_{max}$ is approximately the minimum $\lambda$ such that all the coefficients of $\hat{\bbet}_{B}$ except the intercept are zero. To estimate $\lambda_{max}$ we first robustly estimate the maximal correlation between $\mathbf{y}$ and the columns of $\mathbf{X}$ using bivariate winsorization as advocated by \cite{RLARS}. We use this estimate as an initial guess for  $\lambda_{max}$ and then improve it using a binary search. If $p>n$, then 0 is excluded from the candidate set. The penalization parameter for $\hat{\bbet}_{A}$, $\iota_n$, is chosen using the same scheme used to choose $\lambda_n$.

The initial S-Ridge estimate is calculated using our own adaption of Maronna's MATLAB code to C++. To solve equation \eqref{wlasso-eq2} we use the FastLasso() function from the {\tt robustHD R} package (\citet{robustHD}). We use the {\tt foreach R} (\citet{foreach}) package for parallel computations when it comes to finding optimal penalization parameters via cross-validation. This provided a significant reduction in computing times in computers with several cores. Extensive parts of our computer code are written in C++ and interfaced with {\tt R} using the {\tt RcppArmadillo} package (\citet{Arma}). An {\tt R} package that includes the functions to calculate the estimators we propose is available at \url{http://esmucler.github.io/mmlasso/}.

\section{Simulations}
\label{sec-sim}

In this section, we compare the performance with regards to prediction accuracy and variable selection properties of 

\begin{itemize}
\item The MM-Lasso estimator described in the previous section.
\item The adaptive MM-Lasso estimator described in the previous section.
\item The Sparse-LTS of \citet{Croux}. The penalization parameter for this estimator is chosen using a BIC-type criterion as advocated by the authors. The estimator was calculated using the sparseLTS() function from the {\tt robustHD R} package.
\item The LS-Lasso estimator. The penalization parameter for this estimator was chosen using 5-fold cross validation using the sum of the squared residuals as the objective function. The estimator was calculated using the lars() function from the {\tt lars R} package (\citet{lars-pack}).
\item The adaptive LS-Lasso estimator. The weights used were the reciprocal of an initial LS-Lasso estimator, calculated as above. Both the initial and the final penalization parameters were chosen using 5-fold cross validation using the sum of the squared residuals as the objective function. The estimator was calculated using the lars() function from the {\tt lars R} package.

\item The Maximum Likelihood Oracle estimator, that is, the Maximum Likelihood estimator applied to the relevant carriers only. When the errors follow a normal distribution, this is the Least Squares estimators applied to the relevant carriers only. Note that in any case, this is not a feasible estimator, and is included for benchmarking purposes only.

\item For the contaminated scenarios, we will also include the Oracle MM estimator: an MM-estimator, calculated with Tukey's bisquare function and tuned to have $85\%$ normal efficiency, applied to the relevant carriers only. The estimator was calculated using the lmRob() function from the {\tt robust R} package (\citet{robust}). Once again, note that this is not a feasible estimator, and is included for benchmarking purposes only.
\end{itemize}
\subsection{Scenarios}
\label{simulation scenarios}

To evaluate the estimators we generate two independent samples of size $n$ of the model $y=\mathbf{x}^{T}\bbet_0+u$. The first sample, called the training sample, is used to fit the estimates and the second sample, called the testing sample, is used to evaluate the prediction accuracy of the estimates. We considered three possible distributions for the errors: a zero mean normal distribution, Student's t-distribution with three degrees of freedom ($t(3)$) and Student's t-distribution with one degree of freedom ($t(1)$). The first case corresponds to the classical scenario of normal errors, the second case has heavy-tailed errors and the third case has extremely heavy-tailed errors. For the first two cases we use the prediction root mean squared error (RMSE) to evaluate the prediction accuracy of the estimates. For the third case, since Student's t-distribution with one degree of freedom does not have a finite first moment, we use the median of the absolute value (MAD) of the prediction  residuals as a measure of the the estimators prediction accuracy. We also evaluate the variable selection performance of the estimators by calculating the false negative ratio (FNR), that is, the fraction of coefficients erroneously set to zero, and the false positive ratio (FPR), the fraction of coefficient erroneously not set to zero.

We consider the following five scenarios for the sample size, the number of covariates, $\bbet_0$ and the distribution of the carriers.

\begin{enumerate}

\item We take $p=8$, $n=40$ and $\bbet_0$ given by: component 1 is 3, component 2 is 1.5, component 6 is 2 and the rest of the coordinates are set to zero. We take $\mathbf{x}\backsim \mathit{N}_{p}(\mathbf{0},\boldsymbol\Sigma)$ with $\Sigma_{i,j}=\rho^{\vert i-j \vert}$ with $\rho =0.5$. For the case of normally distributed errors, we take the standard deviation of the errors to be $\sigma=3$.
\item The same as the last one, but with $n=60$ and $\sigma=1$.
\item We take $p=30$, $n=100$ and $\bbet_0$ given by: components 1-5 are 2.5, components 6-10 are 1.5, components 11-15 are 0.5 and the rest are zero. We take $\mathbf{x}\backsim \mathit{N}_{p}(\mathbf{0},\boldsymbol\Sigma)$ with $\Sigma_{i,j}=\rho^{\vert i-j \vert}$ with $\rho =0.95$. For the case of normally distributed errors, we take the standard deviation of the errors to be $\sigma=1.5$.
\item We take $p=200$, $n=100$ and $\bbet_0$ given by: components 1-5 are 2.5, components 6-10 are 1.5, components 11-15 are 0.5 and the rest are zero. The first $15$ covariates $(x_1,...,x_{15})$ and the remaining $185$ covariates $(x_{16},...,x_{200})$ are independent. The first 15 covariates have a zero mean multivariate normal distribution. The pairwise correlation between the $i$th and $j$th components of $(x_1,...,x_{15})$ is $\rho^{\vert i-j \vert}$ with $\rho=0.5$ for $i, j= 1,...,15$. The final 185 covariates have a zero mean multivariate normal distribution. The pairwise correlation between the $i$th and $j$th components of $(x_{16},...,x_{200})$ is $\rho^{\vert i-j \vert}$ with $\rho=0.5$ for $i, j= 16,...,200$. For the case of normally distributed errors, we take the standard deviation of the errors to be $\sigma=1.5$.
\item The same as the last one, but with $\rho=0.95$.
\item The same as Scenario 1, but with $p=250$ and $n=50$.

\end{enumerate}

In Scenario 1 we have a moderately high $p/n$ ratio. In Scenario 2 we have a relatively low $p/n$ ratio. In Scenario 3 we have $p<n$ and high $p/n$ ratio and in Scenarios 4, 5 and 6 we have $p>n$. Scenarios 1 and 2 were analysed in \cite{lasso} and \cite{Fan}. Scenarios 3, 4 and 5 were analysed in \cite{Huang}.

To evaluate the robustness of the estimators for the case of high-leverage outliers, we introduce contaminations in all six scenarios, for the case of normal errors. Note that we only contaminate the training sample and not the testing sample. We take $m=[0.1 n]$ and for $i=1,..,m$ we set $y_i=5 y_0$ and $\mathbf{x}_i=(5,...,0)$. We moved $y_0$ in an uniformly spaced grid between 0 and 3 with step 0.1 and then between 3 and 10 with step 1. To summarize the results for the contaminated scenarios we report for each estimator the maximum RMSE, FNR and FPR over all outlier sizes $y_0$. We note that the RMSEs of the LS-Lasso and the adaptive LS-Lasso are unbounded as a function of the outlier size and thus the range of outlier sizes considered aims at finding the maximum RMSE of the MM-Lasso, the adaptive MM-Lasso and the Sparse-LTS.

The number of Montecarlo replications for the uncontaminated scenarios was $M=500$. The number of Montecarlo replications for contaminated scenarios was reduced to $M=100$, to keep computation times reasonably low. 

\newpage

\subsection{Results}

We now present the results of our simulation study. All results are rounded to two decimal places. Table \ref{tab:sincon} shows the results for Scenarios 1 through 6 without contamination.

Regarding the prediction accuracy of the estimators, for the case of normal errors, the MM-Lasso and the adaptive MM-Lasso have a RMSE of the same order, and at times even lower than that of Lasso and the adaptive Lasso.  The Sparse-LTS shows a good behaviour in Scenarios 1, 2 and 5, but its RMSE is much larger than that of the other estimators for the remaining scenarios. For the case of errors with t(3) or t(1) distribution, the MM-Lasso and the adaptive MM-Lasso show the best overall performance. We were surprised by the fact that for t(3) errors, the Lasso and the adaptive Lasso have a reasonably low RMSE when compared with the maximum likelihood oracle. As expected, the Lasso and the adaptive Lasso lose all predictive power when the errors have a t(1) distribution. Except for Scenarios 3, 4 and 6, the Sparse-LTS shows a reasonably good performance.
Regarding the variable selection properties of the estimators, we note that the FPR and the FNR of the MM-Lasso are comparable to that of the Lasso, and the FPR and FNR of the adaptive MM-Lasso are comparable to that of the adaptive Lasso for the case of normal errors. For errors with t(3) or t(1) distribution, the MM-Lasso and the adaptive MM-Lasso generally show the best behaviour. The FPR of the adaptive MM-Lasso is lower than that of the MM-Lasso, but the price to pay for this improvement is an increase in the FNR. Note that for Scenarios 1, 2 and 3 the Sparse-LTS has a rather high FPR, always greater than 0.5.

In Table \ref{tab:con} we show the results for Scenarios 1 through 6 under high-leverage contamination. The MM-Lasso and the adaptive MM-Lasso show the best overall behaviour. The Sparse-LTS shows a good behaviour for Scenarios 1, 2, and the best behaviour for Scenario 5, but its maximum RMSE is much larger than that of the MM-Lasso and the adaptive MM-Lasso for the rest of the scenarios. As expected, the maximum RMSE of the Lasso and the adaptive Lasso is very large in all cases. In Figure \ref{graf:curvahuang} we show the RMSEs of the estimators as a function of the outlier size for Scenario 3. The MM-Lasso has the overall best behaviour, followed closely by the adaptive MM-Lasso. Note that the RMSE curves of the Lasso and of the adaptive Lasso are unbounded as a function of the outlier size: by taking larger outlier sizes the maximum RMSEs of the MM-Lasso, the adaptive MM-Lasso and the Sparse-LTS would not change, but those of the Lasso and the adaptive Lasso would increase without bound. Regarding the variable selection properties of the estimators, the MM-Lasso and the adaptive MM-Lasso show the best overall balance between a low FNR and a low FPR. Note that for Scenarios 1, 2, 3 the maximum FPR of the Sparse-LTS is very high.

\begin{center}
\begin{figure}[!h]
  \centering
  \includegraphics[width = \textwidth]{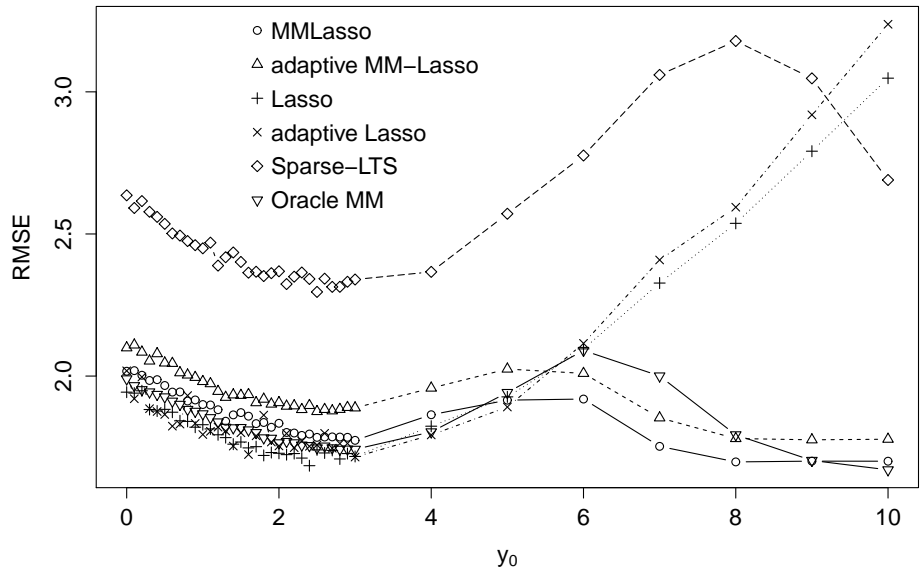}
  \caption{RMSEs as a function of outlier sizes for each of the estimators for the third scenario, with $p=30$, $n=100$, normal errors and 10\% contamination. RMSEs are averaged over 100 replications.}
  \label{graf:curvahuang}
\end{figure}
\end{center}

\newpage

\begin{table}[!htp]
  \centering
\tabcolsep=0.11cm  
  \begin{tabular}{p{3cm}p{1cm}p{0.7cm}p{0.7cm}p{1cm}p{0.7cm}p{0.7cm}p{1cm}p{0.7cm}p{0.7cm}}
    \hline\noalign{\smallskip}
	Scenario & Normal && & $t(3)$ &&&$t(1)$ &&\\
    \hline\noalign{\smallskip}
    	 &RMSE & FNR & FPR &RMSE & FNR & FPR&MAD & FNR & FPR\\
        \hline
		 		1 $(n,p)=(40,8)$  &\\
    \hline\noalign{\smallskip}
	 MM-Lasso & 3.42 & 0.04 & 0.52 & 1.77 & 0 & 0.52& 1.36 & 0.01 & 0.50\\
 	adaptive MM-Lasso & 3.43 & 0.09 & 0.27 & 1.75 & 0 & 0.20& 1.32 & 0.02 & 0.21\\
    Sparse-LTS & 3.92 & 0.03 & 0.82 & 1.91 & 0 & 0.85& 1.44 & 0 & 0.69\\
        Lasso & 3.33 & 0.02 & 0.43 & 1.84 & 0 & 0.46& 9.9 & 0.38 & 0.28\\
        adaptive Lasso & 3.28 & 0.06 & 0.26 & 1.82 & 0.01 & 0.29& 10 & 0.46 & 0.19\\
            Oracle &3.15& 0 & 0   & 1.69 & 0 & 0& 1.16 & 0 & 0\\
        \hline
		 		2 $(n,p)=(60,8)$ &\\
    \hline\noalign{\smallskip}
    	 MM-Lasso & 1.09 & 0 & 0.53	 & 1.77 & 0 & 0.51& 1.21 & 0 & 0.46\\
 	adaptive MM-Lasso & 1.07 & 0 & 0.21 & 1.75 & 0 & 0.18& 1.18 & 0 & 0.16\\
    Sparse-LTS & 1.16 & 0 & 0.69 & 1.76 & 0 & 0.67& 1.24 & 0 & 0.57\\
       Lasso & 1.07 & 0 & 0.48 & 1.82 & 0 & 0.46& 5.38 & 0.39 & 0.29\\
    adaptive Lasso & 1.07 & 0 & 0.31 & 1.73 & 0 & 0.31& 5.54 & 0.47 & 0.19\\
            Oracle &1.04& 0 & 0   & 1.72& 0 & 0& 1.10 & 0 & 0\\
        \hline
		 		3 $(n,p)=(30,100)$ &\\
    \hline\noalign{\smallskip}
  	 MM-Lasso & 1.69 & 0.13 & 0.21 & 1.75 & 0.10 & 0.27& 1.28 & 0.15 & 0.17\\
 	adaptive MM-Lasso & 1.77 & 0.26 & 0.09 & 1.80 & 0.21 & 0.09& 1.39 & 0.29 & 0.06\\
    Sparse-LTS & 2.25 & 0 & 1 & 2.14 & 0 & 1& 1.78 & 0.01 & 0.97\\
       Lasso & 1.74 & 0.11 & 0.22 & 1.90 & 0.12 & 0.27& 10.7 & 0.55 & 0.21\\
    adaptive Lasso & 1.74 & 0.21 & 0.13 & 1.94 & 0.22 & 0.14& 10.7 & 0.72 & 0.09\\
            Oracle &1.63& 0 & 0   & 1.73 & 0 & 0& 1.27 & 0 & 0\\
        \hline
		 		4 $(n,p)=(100,200)$ &\\
    \hline\noalign{\smallskip}
  	 MM-Lasso & 1.90 & 0.02 & 0.12 & 2.02 & 0.02 & 0.1& 1.90 & 0.08 & 0.09\\
 	adaptive MM-Lasso & 1.78 & 0.08 & 0.02 & 1.89 & 0.07 & 0.01& 1.71 & 0.15 & 0.03\\
    Sparse-LTS & 3.32 & 0.14 & 0.11 & 3.17 & 0.12 & 0.01& 2.43 & 0.13 & 0.12\\
       Lasso & 1.96 & 0.02 & 0.23 & 2.19 & 0.02 & 0.23& 7.79 & 0.51 & 0.1\\
    adaptive Lasso & 2.16 & 0.04 & 0.15 & 2.41 & 0.05 & 0.16& 9.10& 0.56 & 0.07\\
            Oracle &1.64& 0 & 0   & 1.77 & 0 & 0& 1.28 & 0 & 0\\
        \hline
		 		5 $(n,p)=(100,200)$ &\\
    \hline\noalign{\smallskip}
  	 MM-Lasso & 1.92 & 0.16 & 0.08 & 1.89 &0.11  &0.06 & 1.42& 0.16 & 0.05\\
 	adaptive MM-Lasso & 1.94 & 0.29 & 0.03 & 1.89 & 0.22& 0.02& 1.47 & 0.31 & 0.01\\
    Sparse-LTS & 1.89 & 0.13 & 0 & 1.98& 0.11 &0& 1.47 & 0.15 & 0\\
       Lasso & 1.88 & 0.11 & 0.21 & 2.12 & 0.12 & 0.22& 6.33 & 0.57 & 0.10\\
    adaptive Lasso & 2.06 & 0.18 & 0.13 & 2.29 & 0.19 & 0.14& 6.75 & 0.70 & 0.10\\
                Oracle &1.64& 0 & 0   & 1.77 & 0 & 0& 1.29 & 0 & 0\\
 \hline
		 		6 $(n,p)=(50,250)$ &\\
    \hline\noalign{\smallskip}
  	 MM-Lasso &4.05 & 0.12 & 0.07 & 1.99 & 0 & 0.06& 2.05 & 0.08 & 0.05\\
 	adaptive MM-Lasso & 3.99 & 0.18 & 0.03 & 1.80 & 0.01 & 0.01& 1.79 & 0.12 & 0.02\\
    Sparse-LTS & 4.72 & 0.26 & 0.12 & 2.60 & 0.04 & 0.09& 2.22 & 0.07 & 0.11\\
       Lasso & 3.67 & 0.05 & 0.07 & 2.04 & 0.01 & 0.07& 30.5 & 0.62 & 0.03\\
    adaptive Lasso & 3.97 & 0.06 & 0.06 & 2.26 & 0.01 & 0.06& 31.3 & 0.64 & 0.02\\
            Oracle &3.13& 0 & 0   & 1.67 & 0 & 0& 1.12 & 0 & 0\\
     \noalign{\smallskip}\hline\noalign{\smallskip}
  \end{tabular}
  \caption{Results for all the simulation scenarios, with normal, $t(3)$ and $t(1)$ distributed errors. RMSE, MAD, FNR and FPR, averaged over 500 replications are reported for each estimator.}
    \label{tab:sincon}
\end{table}

\FloatBarrier

\newpage

\begin{table}[!htp]
  \centering
  \begin{tabular}{p{3cm}p{2cm}p{1.5cm}p{1.5cm}}
    \hline\noalign{\smallskip}
    Scenario &Max. RMSE &Max. FNR &Max. FPR \\
        \hline
		 		1 $(n,p)=(40,8)$ &\\
    \hline\noalign{\smallskip}
    MM-Lasso & 4.38 & 0.11 & 0.57 \\
    adaptive MM-Lasso & 4.43 & 0.25 & 0.32 \\
    Sparse-LTS & 4.92 & 0.07 & 0.95 \\
        Lasso & 5.78 & 0.27 & 0.49 \\
    adaptive Lasso & 6.14 & 0.36 & 0.33 \\
    Oracle MM& 3.71 & 0 & 0 \\
        \hline
		 		2 $(n,p)=(60,8)$ &\\
    \hline\noalign{\smallskip}
        MM-Lasso & 1.39 & 0 & 0.59 \\
    adaptive MM-Lasso & 1.38 & 0.01 & 0.36 \\
    Sparse-LTS & 1.42 & 0 & 0.92 \\
        Lasso & 4.89 & 0.19 & 0.56 \\
    adaptive Lasso & 5.13 & 0.25 & 0.38 \\
    Oracle MM& 1.21 & 0 & 0 \\
        \hline
		 		3 $(n,p)=(100,30)$ &\\
    \hline\noalign{\smallskip}       
 MM-Lasso & 2.02 & 0.20 & 0.35 \\
    adaptive MM-Lasso & 2.11 & 0.36 & 0.21 \\
    Sparse-LTS & 3.18 & 0 & 1 \\
        Lasso & 3.05 & 0.25 & 0.26 \\
    adaptive Lasso & 3.24 & 0.41 & 0.15 \\
    Oracle MM&2.09& 0 & 0   \\
        \hline
		 		4 $(n,p)=(100,200)$ &\\
    \hline\noalign{\smallskip}   
    MM-Lasso & 4.14 & 0.13 & 0.24 \\
    adaptive MM-Lasso & 4.02 & 0.21 & 0.12 \\
    Sparse-LTS & 5.25 & 0.28 &  0.15\\
        Lasso & 6.74 & 0.31 & 0.21 \\
    adaptive Lasso & 7.96 & 0.40 & 0.14 \\
                Oracle MM&2.09& 0 & 0   \\
        \hline
		 		5 $(n,p)=(100,200)$&\\
    \hline\noalign{\smallskip}  
    MM-Lasso & 2.48 & 0.21 & 0.15 \\
    adaptive MM-Lasso & 2.72 & 0.37 & 0.05 \\
    Sparse-LTS & 2.14 & 0.22 & 0\\
        Lasso & 20.25 & 0.64 & 0.15 \\
    adaptive Lasso & 13.03 & 0.79& 0.06 \\
                Oracle MM&2.09& 0 & 0   \\
         \hline
		 		6 $(n,p)=(50,250)$ &\\
    \hline\noalign{\smallskip}   
    MM-Lasso & 4.97 & 0.36 & 0.08 \\
    adaptive MM-Lasso & 5.08 & 0.45 & 0.04 \\
    Sparse-LTS & 5.40 & 0.47 &  0.11\\
        Lasso & 6.04 & 0.42& 0.07 \\
    adaptive Lasso & 7.89 & 0.45 & 0.06 \\
                Oracle MM&3.68& 0 & 0   \\
    \noalign{\smallskip}\hline\noalign{\smallskip}

  \end{tabular}
  \caption{Results for all the scenarios with normal errors and 10\% contaminated observations. Maximum RMSEs, FNRs and FPRs over all outlier sizes are averaged over 100 replications.}
  \label{tab:con}
\end{table}

Finally, we calculated the computing times of the adaptive MM-Lasso, the MM-Lasso and the Sparse-LTS for several of the considered scenarios, for the case of normal errors and no contamination. Since the computing times for the adaptive MM-Lasso and the MM-Lasso were very similar, we only report the results for the adaptive MM-Lasso. Computing times were averaged over 5 replications and calculations were performed on {\tt R} 3.0.2 on a 3.07x4 GHz Intel Core i7 PC. We see that in Scenarios 1, 3 and 5 the Sparse-LTS is considerably faster than the adaptive MM-Lasso. However, in Scenario 6, the adaptive MM-Lasso is 3 times faster than the Sparse-LTS.

\begin{table}[!htp]
  \centering
  \begin{tabular}{p{3cm}p{3cm}p{3cm}}
    \hline\noalign{\smallskip}
     Scenario&adaptive MM-Lasso & Sparse-LTS \\
    \hline\noalign{\smallskip}
     1 $(n,p)=(40,8)$&3.33&0.7 \\
  	 3 $(n,p)=(100,30)$&7.35&1.71 \\
     5 $(n,p)=(100,200)$&41.75&28.51 \\
     6 $(n,p)=(50,250)$&8.05&25.89 \\    
    \noalign{\smallskip}\hline\noalign{\smallskip}

  \end{tabular}
  \caption{Computing times in seconds for the adaptive MM-Lasso and the Sparse-LTS, averaged over 5 replications.}
  \label{tab:times}
\end{table}

\FloatBarrier

\newpage

\section{A real high-dimensional data set}
\label{sec-real}

In this section, we analyse a data set corresponding to electron-probe X-ray microanalysis of archaeological glass vessels, where each of $n=180$ glass vessels is represented by a spectrum on 1920 frequencies. For each vessel the contents of thirteen chemical compounds are registered. This data set appears in \citet{Janssens}, and was previously analysed in \cite{Maronna}. We fit a linear model where the response variable is the content of the $13th$ chemical compound (PbO) and the carriers are the 1920 frequencies measures on each glass vessel. Since for frequencies below 15 and above 500 the values of $x_{ij}$ are almost null and show very little variability, we keep frequencies 15 to 500, so that we have $p = 486$. We apply the MM-Lasso, the adaptive MM-Lasso, the Sparse-LTS, the Lasso and the adaptive Lasso estimators to the data.

The MM-Lasso selects seven variables: the $28th$, $145th$, $337th$, $338th$, $372nd$, $374th$ and $403rd$ frequencies. The adaptive MM-Lasso selects four variables: the $28th$, $145th$, $337th$ and $374th$ frequencies. Thus, the adaptive MM-Lasso drops three of the variables selected by the MM-Lasso. The Sparse-LTS selects three variables: the  $338th$ and $403rd$ and $466$ frequencies. The Lasso selects 70 variables, the adaptive Lasso selects 49. Hence, all three robust estimators produce models that are sparser and easier to interpret.

To asses the prediction accuracy of the estimators, we used 5-fold cross-validation. The criterion used was a $\tau$-scale of the residuals, calculated as in \citet{OGK}. The adaptive MM-Lasso and the Lasso show the best behaviour by far, followed by the Lasso, the adaptive Lasso and the Sparse-LTS, in that order.

%
%\begin{table}[h!]
%  \centering
%  \begin{tabular}{p{3cm}p{2cm}p{1.5cm}}
%    \hline\noalign{\smallskip}
%    &RMSE(0.9)  & RMSE \\
%    \noalign{\smallskip}\hline\noalign{\smallskip}
%    MM-Lasso & 0.089 &  0.957\\
%    adaptive MM-Lasso & 0.085 &  0.951\\
%	MM-Ridge &0.088 & 0.892 \\
%    Sparse-LTS & 0.225 &  0.894\\
%    Lasso &0.1& 0.154 \\
%    adaptive Lasso &0.117& 0.182 \\	        
%        Ridge & 0.119 & 0.186 \\
%
%    \noalign{\smallskip}\hline\noalign{\smallskip}
%  \end{tabular}
%  \caption{Cross-validated RMSE and RMSE(0.9) of each of the estimators for the electron-probe X-ray microanalysis of archaelogical glass vessels data.}
%  \label{tab:vessel}
%\end{table}
%
%
%
\begin{table}[!ht]
  \centering
  \begin{tabular}{p{3cm}p{2cm}p{1.5cm}}
    \hline\noalign{\smallskip}
    &$\tau$-scale\\
    \noalign{\smallskip}\hline\noalign{\smallskip}
    MM-Lasso & 0.086 \\
    adaptive MM-Lasso & 0.083\\
    Sparse-LTS & 0.329\\
    Lasso &0.131\\
    adaptive Lasso &0.138\\	        

    \noalign{\smallskip}\hline\noalign{\smallskip}
  \end{tabular}
  \caption{Cross-validated $\tau$-scale of the residuals of each of the estimators for the electron-probe X-ray microanalysis data.}
  \label{tab:vessel}
\end{table}

%\FloatBarrier
%\newpage
%
%\begin{center}
%\begin{figure}[h!]
%  \centering
%  \includegraphics[width = 0.75\textwidth]{resvfitmm.pdf}
%  \caption{Residual versus fitted plot of the fitted MM-Lasso estimator.}
%  \label{graf:resvfit:MM}
%\end{figure}
%\end{center}
%
%\begin{center}
%\begin{figure}[h!]
%  \centering
%  \includegraphics[width = 0.75\textwidth]{resvfitamm.pdf}
%  \caption{Residual versus fitted plot of the fitted adaptive MM-Lasso estimator.}
%  \label{graf:resvfit:aMM}
%\end{figure}
%\end{center}
%
%
%
%\begin{center}
%\begin{figure}[h!]
%  \centering
%  \includegraphics[width = 0.75\textwidth]{resvfitLTS.pdf}
%  \caption{Residual versus fitted plot of the fitted Sparse-LTS estimator.}
%  \label{graf:resvfit:LTS}
%\end{figure}
%\end{center}
%
%
%
%
%
%\begin{center}
%\begin{figure}[h!]
%  \centering
%  \includegraphics[width = 0.75\textwidth]{resvfitlasso.pdf}
%  \caption{Residual versus fitted plot of the fitted Lasso estimator.}
%  \label{graf:resvfit:Lasso}
%\end{figure}
%\end{center}
%
%
%\begin{center}
%\begin{figure}[h!]
%  \centering
%  \includegraphics[width = 0.75\textwidth]{resvfitalasso.pdf}
%  \caption{Residual versus fitted plot of the fitted adaptive Lasso estimator.}
%  \label{graf:resvfit:aLasso}
%\end{figure}
%\end{center}
\FloatBarrier

\section{Conclusions}
\label{conclusions}

We have studied the robust and asymptotic properties of MM-Bridge and adaptive MM-Bridge regression estimators. We proved that, for the case of a fixed number of covariates, MM-Bridge estimators can have the \textit{oracle property} defined in \cite{Fan} whenever $q<1$. We proved that adaptive MM-Bridge estimators can have the \textit{oracle property} for all $t\leq1$. We also derived the asymptotic distribution of the MM-Ridge estimator of \citet{Maronna}. 

We proposed an algorithm to calculate both the MM-Lasso and the adaptive MM-Lasso. Our simulation study suggests that, at least for the scenarios considered, the proposed MM-Lasso and adaptive MM-Lasso estimators provide the best balance between prediction accuracy and sparse modelling for uncontaminated samples, and stability in the presence of outliers. The adaptive MM-Lasso reduces the false positive ratio of the MM-Lasso, with the unpleasant, and foreseeable, side effect of an increase in the false negative ratio. We note that even though we derived our asymptotic results for the case of a fixed number of covariates, the MM-Lasso and adaptive MM-Lasso estimators can be calculated for $p>n$. The study of the asymptotic properties of the these estimators for regression models with a diverging number of parameters is part of our future work.

\appendix
\section{Appendix}
\label{Appendix}

\begin{proof}[Proof of Theorem \ref{Theo-break-br}]
Take $C\subset\{1,2,...,n\}$ such that $\#C=n-1$ and a sequence $(\mathbf{x}^{T}_{Ni},y_{Ni})_{N\in\mathbb{N}}$, such that $(\mathbf{x}_{N,i}^{\text{T}%
},y_{N,i})=(\mathbf{x}_{i}^{T},y_{i})$ for $i\not \in C$ and all
$N\in\mathbb{N}$. Let $\widehat{\bbet}_{B}^{N}$ and $\widehat{\bbet}_{1}^{N}$denote the estimators $\widehat{\bbet}_{B}$ and $\widehat{\bbet}_{1}$ computed in $(\mathbf{x}^{T}_{Ni},y_{Ni})_{N\in\mathbb{N}}$. Since there are a finite number of sets
included in $\{1,...,n\}$, to prove the theorem it will be enough to show that
$(\widehat{\bbet}_{B}^{N})_{N}$ is bounded. Suppose that this is
not so, then eventually passing to a subsequence we can assume that
$\Vert\widehat{\bbet}_{B}^{N}\Vert\rightarrow\infty$ when
$N\rightarrow\infty$. Since $\rho_1$ is bounded, for sufficiently large $N$ we have that
\begin{equation}
\sum\limits_{i=1}^{n}\rho_{1}\left(  \frac{r_{i}(\widehat{\bbet}_{B}^{N})}{s_{n}%
(\mathbf{r}(\widehat{\bbet}_{1}^{N}))}\right)+\lambda_n\Vert \widehat{\bbet}_{B}^{N} \Vert_{q}^{q} > 
\sum\limits_{i=1}^{n}\rho_{1}\left(  \frac{r_{i}(\mathbf{0})}{s_{n}%
(\mathbf{r}(\widehat{\bbet}_{1}^{N}))}\right)+\lambda_n\Vert \mathbf{0} \Vert_{q}^{q},
\nonumber
\end{equation}
which contradicts the definition of $\widehat{\bbet}_{B}^{N}$.
\end{proof}

\begin{proof}[Proof of Theorem \ref{Theo-break-ad}]
Let $m=nFBP(\widehat{\bbet}_2)$. Take $C\subset\{1,2,...,n\}$ such that $\#C\leq m$ and a sequence $(\mathbf{x}^{T}_{Ni},y_{Ni})_{N\in\mathbb{N}}$, such that $(\mathbf{x}_{N,i}^{\text{T}%
},y_{N,i})=(\mathbf{x}_{i}^{T},y_{i})$ for $i\not \in C$ and all
$N\in\mathbb{N}$. Let $\widehat{\bbet}_{A}^{N}$, $\widehat{\bbet}_{2}^{N}$ and $\widehat{\bbet}_{1}^{N}$ denote the estimators $\widehat{\bbet}_{A}$, $\widehat{\bbet}_{2}$ and $\widehat{\bbet}_{1}$ computed in $(\mathbf{x}^{T}_{Ni},y_{Ni})_{N\in\mathbb{N}}$. Note that since $\#C\leq m$, $\widehat{\bbet}_{2}^{N}$ is bounded. Since there are a finite number of sets
included in $\{1,...,n\}$, to prove the theorem it will be enough to show that
$(\widehat{\bbet}_{A}^{N})_{N}$ is bounded. Suppose that this is
not so, then eventually passing to a subsequence we can assume that for some $j_0$, $\vert\widehat{\bbet}_{A,j_0}^{N}\vert\rightarrow\infty$ when
$N\rightarrow\infty$. Hence, there exists $N_0$, such that for $N\geq N_0$, $\widehat{\beta}_{2,j_0}^{N}\neq 0$. It follows that $\vert\widehat{\bbet}_{A,j_0}^{N}\vert^{t} / \vert \widehat{\beta}_{2,j_0}^{N} \vert^{\varsigma} \rightarrow
\infty$.

Since $\rho_1$ is bounded, for sufficiently large $N$ we have that
\begin{equation}
\sum\limits_{i=1}^{n}\rho_{1}\left(  \frac{r_{i}(\widehat{\bbet}_{A}^{N})}{s_{n}%
(\mathbf{r}(\widehat{\bbet}_{1}^{N}))}\right)+\iota_n \sum\limits_{i=1}^{p} \frac{\vert \widehat{\beta}_{A,j}^{N} \vert^{t}}{\vert \widehat{\beta}_{2,j}^{N} \vert^{\varsigma}} > 
\sum\limits_{i=1}^{n}\rho_{1}\left(  \frac{r_{i}(\mathbf{0})}{s_{n}%
(\mathbf{r}(\widehat{\bbet}_{1}^{N}))}\right)+\iota_n \sum\limits_{i=1}^{p} \frac{\vert 0 \vert^{t}}{\vert \widehat{\beta}_{2,j}^{N} \vert^{\varsigma}}
\nonumber
\end{equation}
which contradicts the definition of $\widehat{\bbet}_{A}^{N}$.
\end{proof}

Define for $\boldsymbol{\beta}\in\mathbb{R}^{p}$ $s(\boldsymbol{\beta})$ by
\begin{equation}
E_{H_0}\rho_0\left(\frac{y-\mathbf{x}^T\bbet}{s(\bbet)}\right)=b,
\nonumber
\end{equation}
and let
\begin{equation}
g(\bbet)=E_{H_0}\rho_1\left(\frac{y-\mathbf{x}^T\bbet}{s(\bbet_{0})}\right).
\nonumber
\end{equation}

It can be readily verified that $s(\bbet)$ is continuous and positive. Lemma 4.2 of \cite{tau 86} shows that $s(\bbet)$ has a unique minimum at $\bbet=\boldsymbol{\beta_0}$, and hence proves the Fisher consistency of S-estimators of regression. Theorem 6 of \cite{Fasano}, shows that $g(\bbet)$ has a unique minimum at $\bbet=\bbet_{0}$, and  hence proves the Fisher consistency of MM-estimators of regression.

The following Lemma, which appears in \cite{tau 86} as Lemma 4.5, is a key result.

\begin{lemma}
\label{Lemma S unif}
Let $(\mathbf{x}_{i}^{\text{T}},y_{i})$, $i=1,...,n,$ be i.i.d observations
with distribution $H_{0}$, which satisfies \eqref{H0}. Assume [B1]-[B3] hold. Let $K\subseteq\mathbb{R}^{p}$ be a compact set. Then
\begin{equation}
\sup_{\boldsymbol{\beta}\in K%
} \vert s_n(\mathbf{r}(\boldsymbol{\beta})) - s(\boldsymbol{\beta}) \vert \cas 0
\nonumber
\end{equation}
\end{lemma}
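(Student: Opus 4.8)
The plan is to reduce the statement to a joint uniform strong law of large numbers combined with the monotonicity in $s$ of the equation that implicitly defines the scale. Throughout, write
\begin{equation}
\Lambda_n(\boldsymbol{\beta},s)=\frac{1}{n}\sum_{i=1}^n \rho_0\!\left(\frac{r_i(\boldsymbol{\beta})}{s}\right),\qquad \Lambda(\boldsymbol{\beta},s)=E_{H_0}\rho_0\!\left(\frac{y-\mathbf{x}^T\boldsymbol{\beta}}{s}\right),
\end{equation}
so that, by \eqref{sn} and the definition of $s(\boldsymbol{\beta})$, the quantities $s_n(\mathbf{r}(\boldsymbol{\beta}))$ and $s(\boldsymbol{\beta})$ are the unique solutions in $s$ of $\Lambda_n(\boldsymbol{\beta},\cdot)=b$ and $\Lambda(\boldsymbol{\beta},\cdot)=b$ respectively. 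Since $s(\cdot)$ is continuous and strictly positive (as noted before the lemma), on the compact set $K$ it satisfies $0<a_1\le s(\boldsymbol{\beta})\le a_2<\infty$.

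First I would fix $\varepsilon\in(0,a_1)$, choose a compact interval $[c_1,c_2]$ with $0<c_1\le a_1-\varepsilon$ and $c_2\ge a_2+\varepsilon$, and establish
\begin{equation}
\sup_{\boldsymbol{\beta}\in K,\; s\in[c_1,c_2]}\bigl|\Lambda_n(\boldsymbol{\beta},s)-\Lambda(\boldsymbol{\beta},s)\bigr|\cas 0.
\end{equation}
Because $\rho_0$ is bounded and continuous by [B1], the integrand $(\mathbf{x},y)\mapsto\rho_0((y-\mathbf{x}^T\boldsymbol{\beta})/s)$ is uniformly bounded by $\sup\rho_0$ and jointly continuous in $(\boldsymbol{\beta},s)$ over the compact set $K\times[c_1,c_2]$ (here $c_1>0$ is what keeps the map continuous). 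This is a standard uniform SLLN over a compact parameter set (a Glivenko--Cantelli argument of Jennrich type), the two ingredients being the uniform domination and the joint continuity, which together yield the required stochastic equicontinuity through a finite-net argument.

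The second ingredient is monotonicity and a uniform gap. For each $\boldsymbol{\beta}$ both $\Lambda_n(\boldsymbol{\beta},\cdot)$ and $\Lambda(\boldsymbol{\beta},\cdot)$ are non-increasing in $s$, since $\rho_0$ is non-decreasing in $|u|$, and $\Lambda(\boldsymbol{\beta},\cdot)$ crosses the level $b$ strictly at its unique root $s(\boldsymbol{\beta})$; assumption [B3] on the error density makes this crossing transversal, because the residuals place positive mass on the region where $\rho_0$ is strictly increasing. Hence I would set
\begin{equation}
\delta(\varepsilon)=\min\Bigl\{\inf_{\boldsymbol{\beta}\in K}\bigl(b-\Lambda(\boldsymbol{\beta},s(\boldsymbol{\beta})+\varepsilon)\bigr),\;\inf_{\boldsymbol{\beta}\in K}\bigl(\Lambda(\boldsymbol{\beta},s(\boldsymbol{\beta})-\varepsilon)-b\bigr)\Bigr\}.
\end{equation}
Each quantity inside the infima is continuous in $\boldsymbol{\beta}$ (using continuity of $s(\cdot)$ and joint continuity of $\Lambda$) and strictly positive for every $\boldsymbol{\beta}$ by the strict monotonicity of $\Lambda(\boldsymbol{\beta},\cdot)$ at its unique root; since $K$ is compact, $\delta(\varepsilon)>0$.

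Finally I would combine the two steps. Once $n$ is large enough that the supremum in the uniform SLLN drops below $\delta(\varepsilon)$, for every $\boldsymbol{\beta}\in K$ one has $\Lambda_n(\boldsymbol{\beta},s(\boldsymbol{\beta})+\varepsilon)<b<\Lambda_n(\boldsymbol{\beta},s(\boldsymbol{\beta})-\varepsilon)$, and the evaluation points lie in $[c_1,c_2]$ by the choice above. By monotonicity of $\Lambda_n(\boldsymbol{\beta},\cdot)$ its root $s_n(\mathbf{r}(\boldsymbol{\beta}))$ must then lie strictly between $s(\boldsymbol{\beta})-\varepsilon$ and $s(\boldsymbol{\beta})+\varepsilon$; this also confirms automatically that the roots fall in $[c_1,c_2]$, so no separate confinement argument is needed. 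As this holds simultaneously for all $\boldsymbol{\beta}\in K$ and $\varepsilon$ is arbitrary, the claimed uniform almost sure convergence follows. I expect the main obstacle to be the uniform transversality that forces $\delta(\varepsilon)>0$, i.e. the strict monotonicity of $\Lambda(\boldsymbol{\beta},\cdot)$ at $s(\boldsymbol{\beta})$ uniformly over $K$; this is exactly where [B3] is essential, whereas the uniform SLLN itself is routine given [B1].
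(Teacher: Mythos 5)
Your argument is correct, but note that the paper does not actually prove this lemma: it is imported verbatim as Lemma 4.5 of \cite{tau 86} (Yohai and Zamar, 1986), so there is no in-paper proof to match yours against. What you have written is the classical self-contained argument underlying that cited result: a Jennrich-type uniform SLLN for $\Lambda_n(\boldsymbol{\beta},s)$ over the compact set $K\times[c_1,c_2]$ (valid because $\rho_0$ is bounded and continuous and $c_1>0$), followed by inversion of the monotone scale equation via a uniform gap $\delta(\varepsilon)>0$. The sandwich step is compatible with the infimum definition of $s_n$: $\Lambda_n(\boldsymbol{\beta},s(\boldsymbol{\beta})+\varepsilon)\leq b$ puts $s(\boldsymbol{\beta})+\varepsilon$ in the defining set, while $\Lambda_n(\boldsymbol{\beta},s)>b$ for all $s\leq s(\boldsymbol{\beta})-\varepsilon$ (by monotonicity) excludes that range, so the root is trapped. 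The only point where a referee might ask for one more line is the ``transversal crossing'': strict monotonicity of $\Lambda(\boldsymbol{\beta},\cdot)$ at its root requires $P\bigl(0<|y-\mathbf{x}^{T}\boldsymbol{\beta}|<c\,s(\boldsymbol{\beta})\bigr)>0$, where $c$ is the point past which $\rho_0$ is constant; this follows because the residual has an absolutely continuous distribution under [B3] and because $\Lambda(\boldsymbol{\beta},s(\boldsymbol{\beta}))=b<1$ would otherwise be forced to equal $1$. With that observation your $\delta(\varepsilon)$ is indeed strictly positive by compactness, and the proof is complete.
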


To ease notation, we will henceforth note $s_{n}=s_{n}(\mathbf{r}(\widehat{\bbet}_{1}))$, where $\widehat{\bbet}_{1}$ is as in \eqref{p-MM}. 

\begin{proof}[Proof of Theorem \ref{Cons}]
We first prove (i). Let
\begin{equation}
Z_n^1(\bbet)=s_{n}^2(\mathbf{r}(\bbet))+\frac{\gamma_n}{n}\Vert\bbet\Vert_{r}^{r},
\nonumber
\end{equation}
so that $\arg\min_{\bbet\in\mathbb{R}^{p}%
} Z_n^1(\bbet)=\widehat{\bbet}_{PS}$.
To prove (i), it suffices to show that
\begin{equation}
\widehat{\bbet}_{PS}\text{ is bounded with probability 1}
\label{S bounded}
\end{equation}
and that given a compact set $K$, we have that
\begin{equation}
\sup_{\bbet\in K%
} \vert Z_n^1(\bbet) - s^{2}(\bbet) \vert \cas 0.
\label{uniform conv s}
\end{equation}

Theorem 4.1 of \cite{tau 88} shows that $\widehat{\bbet}_{S}$ converges almost surely to $\bbet_{0}$ and so \eqref{S bounded} follows from \eqref{norm-ineq-S}. Note that the second term in $Z_n^1$ converges uniformly to zero over compact sets, and hence Lemma \ref{Lemma S unif} and the continuity of $s(\bbet)$ show that \eqref{uniform conv s} holds. Thus \eqref{S-cs} is proved.

Next, we prove (iii). The proof of (ii) is essentially the same, and is thus omitted.

Note that by definition of $\widehat{\bbet}_{A}$
\begin{equation}
\frac{1}{n}\sum\limits_{i=1}^{n} \rho_1 \left( \frac{y_i-\mathbf{x}_{i}^{T} \widehat{\bbet}_{A}}{s_n}  \right)\leq  \frac{1}{n}\sum\limits_{i=1}^{n} \rho_1 \left( \frac{u_i}{s_n}  \right) + \frac{\iota_n}{n}  \sum\limits_{j=1}^{p} \frac{\vert \beta_{0,j} \vert^{t}}{\vert \widehat{\beta}_{2,j} \vert^{\varsigma}}
\label{desig-cons}
\end{equation}
The second term in \eqref{desig-cons} is
\begin{equation}
\leq \frac{\iota_n}{n}  \sum\limits_{j=1}^{s} \frac{\vert \beta_{0,j} \vert^{t}}{\vert \widehat{\beta}_{2,j}\vert^{\varsigma}} = O_{P} \left( \frac{\iota_n}{n} \right),
\nonumber
\end{equation}
since $\widehat{\bbet_2}$ is consistent by assumption. Since $\widehat{\bbet}_{1}$ is consistent by assumption, by Lemma \ref{Lemma S unif}, $s_n \cas s(\bbet_0)$. Thus, the Law of large numbers and the Bounded convergence theorem imply that the right hand side of \eqref{desig-cons} converges almost surely to
\begin{equation}
b^{*} \doteq E_{F_0} \rho_1 \left( \frac{u}{s(\bbet_0)}  \right).
\nonumber
\end{equation}

Hence, 
\begin{equation}
\limsup \frac{1}{n}\sum\limits_{i=1}^{n} \rho_1 \left( \frac{y_i-\mathbf{x}_{i}^{T} \widehat{\bbet}_{A}}{s_n}  \right) \leq b^{*} \text{ a.s.}.
\nonumber
\end{equation}

One can easily show that the graphs of the family of functions
\begin{equation}
\mathcal{H}\doteq \left \{ \rho_1 \left( \frac{y - \mathbf{x}^{T}\mathbf{b}}{s} \right) : \: \mathbf{b}\in\mathbb{R}^{p}, \: s>0 \right \},
\nonumber
\end{equation}
form a VC class of sets with a constant envelope. The proof of this is essentially the same as the one that appears on page 29 of \citet{Pollard}. It follows that $\mathcal{H}$ is a Glivenko-Cantelli class of functions, i.e.
\begin{equation}
\sup_{\mathbf{b}\in\mathbb{R}^{p}, \: s>0} \vert \frac{1}{n}\sum\limits_{i=1}^{n}  \rho_1 \left( \frac{y_{i} - \mathbf{x}_{i}^{T}\mathbf{b}}{s} \right) - E_{H_0}\rho_1 \left( \frac{y - \mathbf{x}^{T}\mathbf{b}}{s} \right) \vert \cas 0.
\label{glivenko}
\end{equation}
Hence, it follows from \eqref{glivenko} and Theorem 6 of \citet{Fasano} that for any $\varepsilon>0$
\begin{equation}
\lim \inf_{\varepsilon \leq \Vert \bbet - \bbet_{0}\Vert } \frac{1}{n}\sum\limits_{i=1}^{n} \rho_1 \left( \frac{y_i-\mathbf{x}_{i}^{T} \bbet}{s_n}  \right) > b^{*} \text{ a.s.}.
\nonumber
\end{equation}

It must be that 
\begin{equation}
\widehat{\boldsymbol{\beta}}_{A} \cas \bbet_{0}.
\nonumber
\end{equation}

\end{proof}

\begin{proof}[Proof of Theorem \ref{Rate}]
We prove (ii), the proof of (i) is essentially the same, but replacing $\iota_n$ for $\lambda_n$ and taking $\varsigma=0$.

Let
\begin{equation}
Z_n^2(\bbet)=\frac{1}{n}\sum\limits_{i=1}^{n}\rho_{1}\left(  \frac{r_{i}(\bbet)}{s_{n}}\right)+\frac{\iota_n}{n} \sum\limits_{j=1}^{p} \frac{\vert \beta_{j} \vert^{t}}{\vert \widehat{\beta}_{2,j} \vert^{\varsigma}},
\nonumber
\end{equation}
so that $\arg\min_{\bbet\in\mathbb{R}^{p}%
} Z_n^2(\bbet)=\widehat{\bbet}_{A}$.

Note that
\begin{align*}
Z_{n}^{2}(\boldsymbol{\beta}_{0})  & =\frac{1}{n}\sum\limits_{i=1}^{n}\rho
_{1}\left(  \frac{u_{i}}{s_{n}}\right)  +\frac{\iota_n}{n} \sum\limits_{j=1}^{s} \frac{\vert \beta_{0,j} \vert^{t}}{\vert \widehat{\beta}_{2,j} \vert^{\varsigma}}.
\end{align*}

A second order Taylor expansion shows that
\begin{align*}
Z_{n}^{2}(\widehat{\boldsymbol{\beta}}_{A})  & =\frac{1}{n}\sum\limits_{i=1}%
^{n}\rho_{1}\left(  \frac{r_{i}(\widehat{\bbet}_{A})}{s_{n}}\right)
+\frac{\iota_n}{n}\sum\limits_{j=1}^{p} \frac{\vert \widehat{\beta}_{A,j} \vert^{t}}{\vert \widehat{\beta}_{2,j} \vert^{\varsigma}}\\
& =\frac{1}{n}\sum\limits_{i=1}^{n}\rho_{1}\left(  \frac{u_{i}}{s_{n}}\right)
-\frac{(\widehat{\boldsymbol{\beta}}_{A}-\boldsymbol{\beta}_{0})^{\text{T}}%
}{ns_{n}}\sum\limits_{i=1}^{n}\psi_{1}\left(  \frac{u_{i}}{s_{n}}\right)
\mathbf{x}_{i}\\
& +\frac{1}{2}\frac{1}{s_{n}^{2}}(\widehat{\boldsymbol{\beta}}_{A}-\boldsymbol{\beta}%
_{0})^{\text{T}}\left(  \frac{1}{n}\sum\limits_{i=1}^{n}\psi_{1}^{\prime}\left(  \frac
{u_{i}-\zeta_{i}\mathbf{x}_{i}^{T}(\widehat{\bbet}_{A}-\bbet%
_{0})}{s_{n}}\right)  \mathbf{x}_{i}\mathbf{x}%
_{i}^{\text{T}}\right)  (\widehat{\boldsymbol{\beta}}_{A}-\boldsymbol{\beta
}_{0})\\&
+\frac{\iota_n}{n}\sum\limits_{j=1}^{p} \frac{\vert \widehat{\beta}_{A,j} \vert^{t}}{\vert \widehat{\beta}_{2,j} \vert^{\varsigma}},
\end{align*}
with $0\leq\zeta_{i}\leq1.$ By Lemma \ref{Lemma S unif}, $s_n \cas s(\bbet_0)$, and hence by Lemma 4.2 of \cite{MM 85}
\[
\ \frac{1}{n}\sum\limits_{i=1}^{n}\psi_{1}^{\prime}\left(  \frac{u_{i}-\zeta_{i}\mathbf{x}_{i}^{T}(\widehat{\bbet}_{A}-\bbet%
_{0})}{s_{n}}\right)  \mathbf{x}_{i}\mathbf{x}_{i}^{\text{T}} \cas E_{F_0}\psi_{1}^{\prime}\left(\frac{u}{s(\bbet_0)}\right)\mathbf{V}_{\mathbf{x}}.
\]

Then
\begin{align*}
A_{n}\doteq \frac{1}{2}\frac{1}{s_{n}^{2}}(\widehat{\boldsymbol{\beta}}_{A}-\boldsymbol{\beta
}_{0})^{\text{T}}\left(  \frac{1}{n}\sum\limits_{i=1}^{n}\psi_{1}^{\prime}\left(  \frac
{u_{i}-\zeta_{i}\mathbf{x}_{i}^{T}(\widehat{\bbet}_{A}-\bbet%
_{0})}{s_{n}}\right)  \mathbf{x}_{i}\mathbf{x}%
_{i}^{\text{T}}\right)  (\widehat{\bbet}_{A}-\bbet_{0}) \\ \geq c_{n} \Vert \widehat{\boldsymbol{\beta}}_{A}-\boldsymbol{\beta}_{0}\Vert^{2},
\end{align*}
where $c_{n} \cas c_{0}>0$.

We also have that by Lemma 5.1 of \cite{MM 85} and the Central Limit Theorem
\[
B_{n}\doteq\frac{1}{\sqrt{n}s_{n}}\sum\limits_{i=1}^{n}\psi_{1}\left(  \frac{u_{i}%
}{s_{n}}\right)  \mathbf{x}_{i}=O_{P}(1).
\]

Put%
\[
C_{n}\doteq   \frac{\iota_n}{n} \sum\limits_{j=1}^{p} \frac{\vert \widehat{\beta}_{A,j} \vert^{t}}{\vert \widehat{\beta}_{2,j} \vert^{\varsigma}} - \frac{\vert \beta_{0,j} \vert^{t}}{\vert \widehat{\beta}_{2,j} \vert^{\varsigma}}.
\]

Then, since $\widehat{\bbet}_{A}$ is strongly consistent for $\bbet_0$ and the first $s$ coordinates of $\bbet_{0}$ are non zero, for large enough $n$ the first $s$ coordinates of $\widehat{\bbet}_{A}$ stay away from zero with arbitrarily high probability. Applying the Mean Value Theorem we get that
\begin{align*}
C_{n} \geq \frac{\iota_n}{n} \sum\limits_{j=1}^{s}  \frac{\vert \widehat{\beta}_{A,j} \vert^{t}}{\vert \widehat{\beta}_{2,j} \vert^{\varsigma}} - \frac{\vert \beta_{0,j} \vert^{t}}{\vert \widehat{\beta}_{2,j}  \vert^{\varsigma}}=\frac{\iota_n}{n}  \sum\limits_{j=1}^{s} t \frac{\vert \theta_{j} \vert^{t-1}}{\vert \widehat{\beta}_{2,j} \vert^{\varsigma}}  (\widehat{\beta}_{A,j} \ - \beta_{0,j}),
\end{align*}
for some $\theta_{j}$ such that $\vert \theta_{j} - \beta_{0,j}\vert \leq \vert \theta_{j} - \widehat{\beta}_{A,j}\vert$. 
Since $\iota_n = O(\sqrt{n})$ and $\widehat{\bbet}_{A}$ and $\widehat{\bbet}_{2}$ are consistent, we have that for some $M>0$, for large enough $n$, with arbitrarily high probability

\begin{equation}
C_n \geq \frac{-M}{\sqrt{n}} \Vert \widehat{\bbet}_{A} -{\bbet}_{0} \Vert.
\nonumber
\end{equation}

Then
\begin{align*}
Z_{n}^{2}(\widehat{\boldsymbol{\beta}}_{A})-Z_{n}^{2}(\boldsymbol{\beta}_{0})
& =A_{n}-\frac{1}{\sqrt{n}}(\widehat{\boldsymbol{\beta}}_{A}-\boldsymbol{\beta
}_{0})^{\text{T}}B_{n}+C_{n}\\
& \geq c_{n}\Vert(\widehat{\boldsymbol{\beta}}_{A}-\boldsymbol{\beta}_{0}%
)\Vert^{2}-\frac{1}{\sqrt{n}}\Vert(\widehat{\boldsymbol{\beta}}_{A}-\boldsymbol{\beta}%
_{0})\Vert \Vert B_{n} \Vert+C_{n}\\
& =\frac{1}{\sqrt{n}}\Vert(\widehat{\boldsymbol{\beta}}_{A}-\boldsymbol{\beta}_{0}%
)\Vert(c_{n}\sqrt{n}\Vert(\widehat{\boldsymbol{\beta}}_{A}-\boldsymbol{\beta}%
_{0})\Vert-\Vert B_{n} \Vert \\&+ \sqrt{n} / \Vert \widehat{\bbet}_{A} -\bbet_{0}\Vert C_{n}).
\end{align*}

Now, since $Z_{n}^{2}(\widehat{\boldsymbol{\beta}}_{A})-Z_{n}^{2}%
(\boldsymbol{\beta}_{0})\leq0$, we have that.
\begin{align*}
\sqrt{n} \Vert \widehat{\boldsymbol{\beta}}_{A}-\widehat{\bbet}_{0}\Vert  &
\leq\frac{\Vert B_{n} \Vert -\sqrt{n} / \Vert \widehat{\bbet}_{A} - \bbet_{0}\Vert C_{n}}{c_{n}}.
\end{align*}

But 
\begin{equation}
\sqrt{n} / \Vert \widehat{\bbet}_{A} - \bbet_{0}\Vert C_{n} \geq -M.
\nonumber
\end{equation}

Hence, $\sqrt{n} \Vert \widehat{\bbet}_{A} - \bbet_{0}\Vert = O_{P}(1).$

\end{proof}

\begin{proof}[Proof of Theorem \ref{Spars}]

We prove (ii). The proof of (ii) is essentially the same, replacing $\iota_n$ by $\lambda_n$ and taking $\varsigma=0$.

We follow Lemma 2 of \cite{Huang}. Since by Theorem \ref{Rate} $\widehat{\bbet}_{A}$ is $\sqrt{n}$-consistent, for a sufficiently large $C>0$ and $n$, $\Vert \widehat{\bbet}_{A} - \bbet_{0} \Vert \leq C/ \sqrt{n}$ with arbitrarily high probability.

Let
\begin{align*}
V_{n}(\mathbf{u}_1,\mathbf{u}_2) &= \sum\limits_{i=1}^{n}\rho_{1}\left(  \frac{r_{i}(\bbet_{0,I}+\mathbf{u}_1/\sqrt{n},\bbet_{0,II}+\mathbf{u}_2/\sqrt{n})}{s_{n}}\right)\\
 &+\iota_{n} \left(\sum\limits_{j=1}^{s} \frac{\vert \beta_{0,j} + u_{1,j}/\sqrt{n} \vert^{t}}{\vert \widehat{\beta}_{2,j} \vert^{\varsigma}}  + \sum\limits_{j=s+1}^{p} \frac{\vert u_{2,j-s}/\sqrt{n} \vert^{t}}{\vert \widehat{\beta}_{2,j} \vert^{\varsigma}} \right).
\end{align*}

Then for large enough $n$, with arbitrarily high probability, $(\widehat{\bbet}_{A,I},\widehat{\bbet}_{A,II})$ is obtained by minimizing $V_{n}(\mathbf{u}_1,\mathbf{u}_2)$ over $\Vert \mathbf{u}_1 \Vert^{2} + \Vert \mathbf{u}_2 \Vert^{2} \leq C^{2}$. We will show that if $\Vert \mathbf{u}_1 \Vert^{2} + \Vert \mathbf{u}_2 \Vert^{2} \leq C^{2}$ and $ \Vert \mathbf{u}_2 \Vert>0$ then, for large enough $n$, $V_{n}(\mathbf{u}_1,\mathbf{u}_2)-V_{n}(\mathbf{u}_1,\mathbf{0}_{p-s})>0$ with arbitrarily high probability and the theorem will follow.

It is easy to see that
\begin{align*}
V_{n}(\mathbf{u}_1,\mathbf{u}_2)-V_{n}(\mathbf{u}_1,\mathbf{0}_{p-s}) &=
\\ \sum\limits_{i=1}^{n}\rho_{1}\left(  \frac{r_{i}(\bbet_{0,I}+\mathbf{u}_1/\sqrt{n},\mathbf{u}_2/\sqrt{n})}{s_{n}}\right)-\rho_{1}\left(  \frac{r_{i}(\bbet_{0,I}+\mathbf{u}_1/\sqrt{n},\mathbf{0}_{p-s})}{s_{n}}\right)&+\\
\frac{\iota_n}{n^{t/2}} \sum\limits_{j=s+1}^{p} \frac{\vert u_{2,j-s} \vert^{t}}{\vert \widehat{\beta}_{2,j} \vert^{\varsigma}}  &= \\
(I)+(II).
\end{align*}

Applying the Mean Value Theorem we get
\begin{equation}
(I)=(\mathbf{0}_{s},\mathbf{u}_{2})^{T}\frac{1}{\sqrt{n}}\frac{-1}{s_n}\sum\limits_{i=1}^{n}\psi_{1}\left( \frac{r_{i}(\boldsymbol{\theta}_{n}^{*})}{s_{n}}\right)\mathbf{x}_{i},
\nonumber
\end{equation}
where $\boldsymbol{\theta}_{n}^{*}=(\bbet_{0,I}+\mathbf{u}_1/\sqrt{n},(1-\alpha_n)\mathbf{u}_2/\sqrt{n})$ for some $\alpha_n \in [0,1]$. Applying the Mean Value Theorem once more we get
\begin{align*}
(\mathbf{0}_{s},\mathbf{u}_{2})^{T}\frac{1}{\sqrt{n}}\frac{-1}{s_n}\sum\limits_{i=1}^{n}\psi_{1}\left( \frac{r_{i}(\boldsymbol{\theta}_{n}^{*})}{s_{n}}\right)\mathbf{x}_{i} = \frac{1}{\sqrt{n}}\frac{-1}{s_n}(\mathbf{0}_{s},\mathbf{u}_{2})^{T}\sum\limits_{i=1}^{n}\psi_{1}\left( \frac{r_{i}(\bbet_{0})}{s_{n}}\right)\mathbf{x}_{i} &+
\\
\frac{1}{\sqrt{n}}\frac{1}{s_n^{2}}(\mathbf{0}_{s},\mathbf{u}_{2})^{T}\sum\limits_{i=1}^{n}\psi_{1}'\left( \frac{r_{i}(\boldsymbol{\theta}_{n}^{**})}{s_{n}}\right)\mathbf{x}_{i}\mathbf{x}_{i}^{T}(\mathbf{u}_1/\sqrt{n},(1-\alpha_n)\mathbf{u}_2/\sqrt{n}) &=
\\
\frac{1}{\sqrt{n}}\frac{-1}{s_n}(\mathbf{0}_{s},\mathbf{u}_{2})^{T}\sum\limits_{i=1}^{n}\psi_{1}\left( \frac{r_{i}(\bbet_{0})}{s_{n}}\right)\mathbf{x}_{i} &+ \\
 \frac{1}{n}\frac{1}{s_n^{2}}(\mathbf{0}_{s},\mathbf{u}_{2})^{T}\sum\limits_{i=1}^{n}\psi_{1}'\left( \frac{r_{i}(\boldsymbol{\theta}_{n}^{**})}{s_{n}}\right)\mathbf{x}_{i}\mathbf{x}_{i}^{T}(\mathbf{u}_1,(1-\alpha_n)\mathbf{u}_2),
\end{align*}
where $ \Vert \boldsymbol{\theta}_{n}^{**} - \bbet_{0} \Vert \leq \Vert \boldsymbol{\theta}_{n}^{*} - \bbet_{0} \Vert$.
By Lemma 5.1 of \cite{MM 85} and Lemma \ref{Lemma S unif}, the first term in the last equation is $O_{P}(\Vert \mathbf{u}_2 \Vert)$. The second term is also $O_{P}(\Vert \mathbf{u}_2 \Vert)$, by Lemma \ref{Lemma S unif}, the fact that by [B1] $\psi'_1$ is bounded, [B4] and the Law of large numbers.

On the other hand
\begin{equation}
\frac{\iota_n}{n^{t/2}} \sum\limits_{j=s+1}^{p} \frac{\vert u_{2,j-s} \vert^{t}}{\vert \widehat{\beta}_{2,j} \vert^{\varsigma}} = \iota_n n^{(\varsigma -t)/2} \sum\limits_{j=s+1}^{p} \frac{\vert u_{2,j-s} \vert^{t}}{\vert \sqrt{n} \widehat{\beta}_{2,j} \vert^{\varsigma}} = \iota_n n^{(\varsigma -t)/2} \Omega_P (\Vert \mathbf{u}_2 \Vert_{t}^{t}),
\nonumber
\end{equation}
since $\widehat{\bbet}_{2}$ is $\sqrt{n}$-consistent by assumption. Note also that $\Vert \mathbf{u}_2 \Vert_{t}^{t} \geq \Vert \mathbf{u}_2 \Vert^{t}$. Hence, for some $M_1, M_2>0$, for sufficiently large $n$ that does not depend on $\mathbf{u}_2$, with arbitrarily high probability, we have that 
\begin{align}
V_{n}(\mathbf{u}_1,\mathbf{u}_2)-V_{n}(\mathbf{u}_1,\mathbf{0}_{p-s}) > -M_1 \Vert \mathbf{u}_2 \Vert + M_2 \iota_n n^{(\varsigma -t)/2} \Vert \mathbf{u}_2 \Vert^{t} \nonumber \\ = \Vert \mathbf{u}_2 \Vert^{t} (-M_1  \Vert \mathbf{u}_2 \Vert^{1-t} + M_2 \iota_n n^{(\varsigma -t)/2}).
\label{desig-fin-sparse}
\end{align}

Finally, since by assumption $\iota_n n^{(\varsigma -t)/2} \rightarrow \infty$, we have that for any sequence of non-zero $\mathbf{u}_{2,n}$ the right hand side of \eqref{desig-fin-sparse} is strictly positive for sufficiently large $n$.

\end{proof}

\begin{proof}[Proof of Theorem \ref{Orac}]

We prove (ii). The proof of (i) is essentially the same, replacing $\iota_n$ by $\lambda_n$ and taking $\varsigma=0$.

For $\boldsymbol{\theta} \in \mathbb{R}^{s}$ let $\mathbf{p}'(\boldsymbol{\theta})=t \sum\limits_{j=1}^{s} sgn(\theta_{j}) \vert \theta_{j} \vert^{t-1} / \vert \widehat{\beta}_{2,j}\vert^{\varsigma} \mathbf{e}_{j}$. Note that by Theorem \ref{Cons}, $\widehat{\bbet}_{A}$ is strongly consistent for $\bbet_{0}$ and hence with probability 1 all the coordinates of $\widehat{\bbet}_{A,I}$ stay away from zero for a sufficiently large $n$.  Also, by Theorem \ref{Spars}, $\widehat{\bbet}_{A,II}=\mathbf{0}_{p-s}$ with probability tending to one. Then for large enough $n$, with arbitrarily high probability the partial derivatives for the first $s$ coordinates of $Z_n^{2}$ at $\widehat{\bbet}_{A}$ exist, and hence
\begin{align*}
\mathbf{0}_{s} &=\frac{1}{\sqrt{n}}\frac{-1}{s_n}\sum\limits_{i=1}^{n}\psi_{1}\left( \frac{y_{i}-\mathbf{x}_{i}^{T}\widehat{\bbet}_{A}}{s_{n}}\right)\mathbf{x}_{i,I}+\frac{\iota_n}{\sqrt{n}}\mathbf{p}'(\widehat{\bbet}_{A,I})  \\
&=\frac{1}{\sqrt{n}}\frac{-1}{s_n}\sum\limits_{i=1}^{n}\psi_{1}\left( \frac{y_{i}-\mathbf{x}_{i,I}^{T}\widehat{\bbet}_{A,I}}{s_{n}}\right)\mathbf{x}_{i,I}+\frac{\iota_n}{\sqrt{n}}\mathbf{p}'(\widehat{\bbet}_{A,I}).
\end{align*}

Then the Mean Value Theorem gives
\begin{align*}
\mathbf{0}_{s}=\frac{1}{\sqrt{n}}\frac{-1}{s_n}\sum\limits_{i=1}^{n}\psi_{1}\left( \frac{y_{i}-\mathbf{x}_{i,I}^{T}\bbet_{0,I}}{s_{n}}\right)\mathbf{x}_{i,I}&+\\
\frac{1}{s_n^{2}}\mathbf{W}_{n}\sqrt{n}(\widehat{\bbet}_{A,I}-\bbet_{0,I}) +
\frac{\iota_n}{\sqrt{n}}\mathbf{p}'(\widehat{\bbet}_{A,I}),
\end{align*}
where 
\begin{equation}
\mathbf{W}_{n}=\frac{1}{n}\sum\limits_{i=1}^{n}\psi_{1}'\left( \frac{y_{i}-\mathbf{x}_{i,I}^{T} \boldsymbol{\theta}_{n}^{*} }{s_{n}}\right)\mathbf{x}_{i,I}\mathbf{x}_{i,I}^{T}
\nonumber
\end{equation}
and $\Vert \boldsymbol{\theta}_{n}^{*} - \bbet_{0,I} \Vert \leq \Vert \widehat{\bbet}_{A,I} - \bbet_{0,I} \Vert$.

Then
\begin{align*}
\sqrt{n}(\widehat{\bbet}_{A,I}-\bbet_{0,I}) &= s_{n} \mathbf{W}_{n}^{-1} \frac{1}{\sqrt{n}}\sum\limits_{i=1}^{n}\psi_{1}\left( \frac{y_{i}-\mathbf{x}_{i,I}^{T}\bbet_{0,I}}{s_{n}}\right)\mathbf{x}_{i,I}\\ &-s_{n}^{2} \frac{\iota_n}{\sqrt{n}} \mathbf{W}_{n}^{-1}  \mathbf{p}'(\widehat{\bbet}_{A,I})
\end{align*}

By Lemma \ref{Lemma S unif}, $s_n \cas s(\bbet_0)$. By Lemma 5.1 of \cite{MM 85} and the Central Limit Theorem
\begin{equation}
\frac{1}{\sqrt{n}}\sum\limits_{i=1}^{n}\psi_{1}\left( \frac{y_{i}-\mathbf{x}_{i,I}^{T}\bbet_{0,I}}{s_{n}}\right)\mathbf{x}_{i,I} \cw \mathit{N}_{s}\left(\mathbf{0},a(\psi_1,F_0)\mathbf{V}_{\mathbf{x}_{I}}\right).
\nonumber
\end{equation}

By Lemma 4.2 of \cite{MM 85} and Lemma \ref{Lemma S unif}
\begin{equation}
\mathbf{W}_n \cas b(\psi_1,F_0) \mathbf{V}_{\mathbf{x}_{I}}.
\nonumber
\end{equation}

Since $\iota_n / \sqrt{n} \rightarrow 0$, the theorem follows from Sluztky's Theorem.

\end{proof}

\begin{proof}[Proof of Theorem \ref{Dist-Asin Knight}]
We define for $\mathbf{z} \in \mathbb{R}^{p}$
\begin{align*}
R_{n} (\mathbf{z})=  \sum\limits_{i=1}^{n}\rho_1\left(\frac{r_i(\bbet_{0}+\mathbf{z}/ \sqrt{n} )}{s_n}\right)- \rho_1\left(\frac{r_i(\bbet_{0})}{s_n}\right) \\ + \lambda_n (\sum\limits_{j=1}^{p} \vert \beta_{0,j} + z_{j}/ \sqrt{n} \vert ^q - \vert \beta_{0,j} \vert ^q),
\end{align*}
so that $\arg\min(R_{n}) = \sqrt{n}(\widehat{\boldsymbol{\beta}}_{B}-\boldsymbol{\beta}_{0})$. We will show that for each compact set $K\subset\mathbb{R}^{p}$, $R_n$ converges weakly to $R$ in $\ell^{\infty}(K)$. To do so, we will verify conditions (i) and (ii) of Theorem 2.3 of \cite{Kim}.

We first prove condition (i): finite-dimensional convergence of $R_n$ to $R$. A second order Taylor expansion shows that

\begin{align}
\label{Taylor-fini}
\sum\limits_{i=1}^{n}\rho_1\left(\frac{r_i(\bbet_{0}+\mathbf{z}/ \sqrt{n} )}{s_n}\right)-\rho_1\left(\frac{r_i(\bbet_{0})}{s_n}\right)&=\\\nonumber
-\mathbf{z}^{T}\frac{1}{s_n}\frac{1}{\sqrt{n}}\sum\limits_{i=1}^{n}\psi_1\left(\frac{r_i(\bbet_{0})}{s_n}\right)\mathbf{x}_{i}+\frac{1}{2}\frac{1}{s_n^{2}}\mathbf{z}^{T}\frac{1}{n}\sum\limits_{i=1}^{n}\psi'_1\left(\frac{u_i - \zeta_{i}\mathbf{x}_{i}^{T}  \mathbf{z}/\sqrt{n}}{s_n}\right)\mathbf{x}_{i}\mathbf{x}_{i}^{T}\mathbf{z},
\end{align}
with $0\leq \zeta_{i} \leq 1$.

It can be easily verified that for $q>1$
\begin{equation}
\lambda_n (\sum\limits_{j=1}^{p} \vert \beta_{0,j} + z_{j}/ \sqrt{n} \vert ^q - \vert \beta_{0,j} \vert ^q) \rightarrow
\lambda_{0} q \sum\limits_{i=1}^{p} z_{j} sgn(\beta_{0,j}) \vert \beta_{0,j} \vert^{q-1}
\label{conv-pen-q1}
\end{equation}
uniformly over compact sets, whereas for $q=1$
\begin{align}
\lambda_n (\sum\limits_{j=1}^{p} \vert \beta_{0,j} + z_{j}/ \sqrt{n} \vert  - \vert \beta_{0,j} \vert ) \rightarrow
\lambda_{0}  \sum\limits_{i=1}^{p} ( z_{j} sgn(\beta_{0,j}) I(\beta_{0,j}\neq0) \label{conv-pen-q2} \\+ \vert z_{j} \vert I(\beta_{0,j} = 0)),
\nonumber
\end{align}
uniformly over compact sets.

Then the finite-dimensional convergence follows from \eqref{Taylor-fini}, \eqref{conv-pen-q1}, \eqref{conv-pen-q2}, Lemma \ref{Lemma S unif}, Lemmas 4.2 and 5.1 of \cite{MM 85}, Slutzky's Theorem and the Cramer-Wold device. See the proof of Theorem \ref{Orac} for more details.

We now turn to proving condition (ii) of Theorem 2.3 of \cite{Kim}, the stochastic equicontinuity of $R_n$.
Fix $\varepsilon, \eta \text{ and } M>0$. Let $\Vert \mathbf{z} \Vert \leq M, \Vert \mathbf{z}' \Vert \leq M$.

A second order Taylor expansion shows that
\begin{align*}
\sum\limits_{i=1}^{n}\rho_1\left(\frac{r_i(\bbet_{0}+\mathbf{z}/ \sqrt{n}) }{s_n}\right)-\rho_1\left(\frac{r_i(\bbet_{0}+\mathbf{z}'/ \sqrt{n}) }{s_n}\right)&=\\\nonumber
-(\mathbf{z-z'})^{T}\frac{1}{s_n}\frac{1}{\sqrt{n}}\sum\limits_{i=1}^{n}\psi_1\left(\frac{u_i - \mathbf{x}_{i}^{T}\mathbf{z'}/\sqrt{n})}{s_n}\right)\mathbf{x}_{i}&+\\\frac{1}{2}\frac{1}{s_n^{2}}(\mathbf{z-z'})^{T}\frac{1}{n}\sum\limits_{i=1}^{n}\psi'_1\left(\frac{u_i-\zeta_i \mathbf{x}_{i}^{T} \mathbf{z}/\sqrt{n} - (1-\zeta_i)\mathbf{x}_{i}^{T}\mathbf{z'}/\sqrt{n}}{s_n}\right)\mathbf{x}_{i}\mathbf{x}_{i}^{T}(\mathbf{z-z'}),
\end{align*}
with $0\leq \zeta_i \leq 1$. Applying the Mean Value Theorem to the first term in the Taylor expansion we get
\begin{align*}
-(\mathbf{z-z'})^{T}\frac{1}{s_n}\frac{1}{\sqrt{n}}\sum\limits_{i=1}^{n}\psi_1\left(\frac{u_i - \mathbf{x}_{i}^{T}\mathbf{z'}/\sqrt{n})}{s_n}\right)\mathbf{x}_{i} = \\-(\mathbf{z-z'})^{T}\frac{1}{s_n}\frac{1}{\sqrt{n}}\sum\limits_{i=1}^{n}\psi_1\left(\frac{u_i}{s_n}\right)\mathbf{x}_{i}+\\(\mathbf{z-z'})^{T}\frac{1}{s_n^{2}}\frac{1}{n}\sum\limits_{i=1}^{n}\psi'_1\left(\frac{u_i-\kappa_i\mathbf{x}_{i}^T\mathbf{z'}/\sqrt{n}}{s_n}\right)\mathbf{x}_{i}\mathbf{x}_{i}^{T}\mathbf{z'},
\end{align*}
with $0\leq \kappa_i \leq 1$.
Then if $\Vert \mathbf{z}-\mathbf{z}' \Vert<\delta$, by Lemma \ref{Lemma S unif} and Lemmas 4.2 and 5.1 of \cite{MM 85} and the fact that by [B1] $\psi'_1$ is bounded, we have
\begin{equation}
\vert\sum\limits_{i=1}^{n}\rho_1\left(\frac{r_i(\bbet_{0}+\mathbf{z}/ \sqrt{n} )}{s_n}\right)-\rho_1\left(\frac{r_i(\bbet_{0}+\mathbf{z}'/ \sqrt{n} )}{s_n}\right)\vert \leq\delta O_{P}(1)+\delta^{2}O_{P}(1).
\label{Ineq-stoc}
\end{equation}
Let $\mathbb{P}^{*}$ stand for outer probability. Then it follows from \eqref{Ineq-stoc} that for sufficiently small $\delta$
\begin{equation}
\limsup\mathbb{P}^{*}\left(\sup \vert \sum\limits_{i=1}^{n}\rho_1\left(\frac{r_i(\bbet_{0}+\mathbf{z}/ \sqrt{n}) }{s_n}\right)-\rho_1\left(\frac{r_i(\bbet_{0}+\mathbf{z}'/ \sqrt{n}) }{s_n}\right) \vert > \eta \right)< \varepsilon,
\nonumber
\end{equation}
where the supremum runs over  $\Vert \mathbf{z} \Vert \leq M, \Vert \mathbf{z}' \Vert \leq M, \Vert \mathbf{z}-\mathbf{z}' \Vert < \delta$. Recalling \eqref{conv-pen-q1} and \eqref{conv-pen-q2} we see that we have proven condition (ii).

Since by Theorem \ref{Rate}, $\arg\min(R_{n}) = \sqrt{n}(\widehat{\boldsymbol{\beta}}_{B}-\boldsymbol{\beta}_{0})=O_{P}(1)$, the theorem follows from Theorem 2.7 of \cite{Kim}.
\end{proof}

\begin{proof}[Proof of Theorem \ref{Dist-Asin Knight2}]
After noting that
\begin{equation}
\lambda_n (\sum\limits_{j=1}^{p} \vert \beta_{0,j} + z_{j}/ \sqrt{n} \vert ^q - \vert \beta_{0,j} \vert ^q) \rightarrow \lambda_0 \sum\limits_{i=1}^{p} \vert z_j \vert^{q} I(\beta_{0,j}=0)
\nonumber
\end{equation}
uniformly over compact sets, the proof follows along the same lines as the proof of Theorem \ref{Dist-Asin Knight}.
\end{proof}


\begin{thebibliography}{99}                                                                                               %
\bibitem [Alfons(2014)]{robustHD} Alfons, A. (2014). robustHD: Robust methods for high-dimensional
  data. R package version 0.5.0.  \url{http://CRAN.R-project.org/package=robustHD}.

\bibitem [Alfons et al.(2013)]{Croux} Alfons, A., Croux, C. and Gelper, S. (2013). Sparse least trimmed squares regression for analyzing high-dimensional data sets. \textit{Ann. App. Statist.} \textbf{7} 226-248.

\bibitem [Buhlmann and van de Geer(2011)] {Buhlmann} Buhlmann, P. and van de Geer, S. (2011). \textit{Statistics for High-Dimensional Data: Methods, Theory and Applications}. New York: Springer.

\bibitem [Davies and Gather(2006)]{davies-gather} Davies P.L. and Gather, U. (2005). Breakdown and groups. \textit{Ann. Statist.} \textbf{34} 1577-579.

\bibitem [Donoho and Huber(1983)]{Donoho}Donoho, D.L. and Huber, P.J. (1983). The notion of breakdown
point. \textit{Festschrift for Erich L. Lehmann} (P.J. Bickel, K.A. Doksum and J.L. Hodges, Jr., eds.) 157-184. Wadsworth, Belmont, Calif.

\bibitem [Eddelbuettel and Sanderson(2014)]{Arma} Eddelbuettel, D. and Sanderson, C. (2014). RcppArmadillo:
 Accelerating R with high-performance C++ linear algebra. \textit{Comput. Statist. Data Anal.} \textbf{71} 1054-1063.

\bibitem [Efron et al.(2004)]{LARS} Efron, B., Hastie, T., Johnstone, I., Tibshirani, R. (2004). Least angle regression. \textit{Ann. Statist.} \textbf{32} 407-499.

\bibitem [Fan and Li(2001)]{Fan}Fan J. and Li R. (2001). Variable Selection via Nonconcave Penalized Likelihood and its Oracle Properties. \textit{J. Amer. Statist. Assoc.} \textbf{96} 1348-1360.

\bibitem [Fasano et al.(2012)]{Fasano}Fasano M.V., Maronna R.A., Sued R.M. and Yohai V.J. (2012).
Continuity and differentiability of regression M functionals. \textit{Bernoulli} \textbf{18} 1289-1309.

\bibitem [Frank and Friedman(1993)]{Frank} Frank, I.E. and  Friedman, J.H. (1993). A statistical view of some common chemometrics regression tools (with discussion). \textit{Technometrics} \textbf{35} 109-148.

%\bibitem [Gijbels and Vrinssen (2015)]{nng} Gijbels, I. and Vrinssen, I.: Robust nonnegative garrote variable selection in linear regression. Computational Statistics and Data Analysis, \textbf{85}, 1-22 (2015)

\bibitem [Hastie and Efron(2013)]{lars-pack} Hastie, T. and Efron, B. (2013). lars: Least Angle Regression, Lasso and Forward Stagewise. R package version 1.2. \url{http://CRAN.R-project.org/package=lars}.

\bibitem[Hoerl and Kennard(1970)]{Ridge} Hoerl, A.E. and Kennard, R.W. (1970). Ridge regression: Biased estimation for Nonorthogonal problems. \textit{Technometrics} \textbf{8} 27-51.


\bibitem [H\"ossjer(1992)]{Hossjer}H\"ossjer, O. (1992). On the optimality of S-estimators.
\textit{Statist. Probab. Lett.} \textbf{14} 413-419.

\bibitem [Huang et al.(2008)]{Huang} Huang, J., Horowitz J.L. and Ma, S. (2008). Asymptotic properties of bridge estimators in sparse high-dimensional regression models. \textit{Ann. Statist.} \textbf{36} 587-613.

%\bibitem [Huang and Xie(2007)]{Xie} Huang, J. and Xie, H.: Asymptotic oracle properties of SCAD-penalized least squares estimators. IMS Lecture Notes-Monograph Series. Asymptotics: Particles, Processes and Inverse Problems. \textbf{55}, 146-166, (2007)

\bibitem[Janssens et al.(1998)]{Janssens} Janssens, K., Deraedt, I., Freddy, A. and Veekman, J. (1998). Composition
of 15–17th Century Archeological Glass Vessels Excavated in Antwerp, Belgium. \textit{Mikrochimica Acta} \textbf{15} 253–267.

\bibitem [Khan et al.(2007)]{RLARS} Khan, J.A, Van Aelst, S., Zamar, R.H. (2007). Robust linear model selection based on least angle regression. \textit{J. Amer. Statist. Assoc.} \textbf{102} 1289-1299.

\bibitem [Kim and Pollard(1990)]{Kim} Kim, J. and  Pollard, D. (1990). Cube Root Asymptotics. \textit{Ann. Statist} \textbf{18} 191-219.

\bibitem [Knight and Fu(2000)]{Knight} Knight, K. and  Fu, W. (2000). Asymptotics for Lasso-type estimators. \textit{Ann. Statist.} \textbf{28} 1356-1378.


\bibitem [Konis et al.(2014)]{robust} Konis, K., Maechler, M., Marazzi, A., Maronna, R., Martin, D.R., Rocke D., Salibian-Barrera, M., Wang, J., Yohai V.J., Zamar R., Zivot E. (2014). robust: Robust Library. R package version 0.4-16. \url{ http://CRAN.R-project.org/package=robust}.


\bibitem [Li et al.(2011)] {sinica} Li, G., Peng, H. and Zhu, L. (2011). Nonconcave penalized M-estimation with a diverging number of parameters. \textit{Statist. Sinica} \textbf{21} 391-419.

\bibitem [Maronna(2011)]{Maronna}Maronna, R.A. (2011). Robust Ridge Regression for High-Dimensional Data. \textit{Technometrics} \textbf{53} 44-53.

\bibitem[Maronna and Yohai(2015)]{DCML} Maronna, R.A. and Yohai, V.J. (2015). High finite-sample efficiency and robustness based on distance-constrained maximum likelihood. \textit{Comput. Statist. Data Anal.} \textbf{83} 262-274.

\bibitem [Maronna et al.(2006)]{Libro}Maronna, R.A., Martin, R.D. and Yohai, V.J. (2006).
\textit{Robust Statistics: Theory and Methods}. Wiley, New York.

\bibitem[Maronna and Zamar(2002)]{OGK} Maronna, R.A. and Zamar, R.H. (2002). Robust estimates of location and dispersion of high-dimensional datasets. \textit{Technometrics} \textbf{44} 307–317.

\bibitem [Pollard(1984)]{Pollard}Pollard D.:
\textit{Convergence of Stochastic Processes}. Springer, New York (1984)

\bibitem [Revolution Analytics and Weston (2013)]{foreach} Revolution Analytics and Weston, S. (2013). foreach: Foreach looping construct for R. R package version 1.4.1. \url{http://CRAN.R-project.org/package=foreach}.

\bibitem [Rousseeuw and Yohai(1984)]{S}Rousseeuw, P.J. and Yohai, V.J. (1984). Robust Regression by Means
of S-estimators. \textit{Lecture Notes in Statist.} \textbf{26} 256-272. Springer, New York.

\bibitem[Tibshirani(1996)]{lasso} Tibshirani, R. (1996). Regression Shrinkage and Selection via the Lasso. \textit{J. Roy. Statist. Soc. Ser. B} \textbf{58} 267-288.

\bibitem [Yohai(1985)]{MM 85}Yohai, V.J. (1985). High Breakdown Point and High Efficiency Robust Estimates for Regression. Technical Report No.66, Department of Statistics, University of Washington, Seattle, Washington, USA. Available at \url{http://www.stat.washington.edu/research/reports/1985/tr066.pdf}.

\bibitem [Yohai(1987)]{MM 87}Yohai, V.J. (1987). High Breakdown Point and High Efficiency Robust Estimates for Regression. \textit{Ann. Statist.} \textbf{15} 642-656.

\bibitem [Yohai and Zamar(1986)]{tau 86}Yohai, V.J. and Zamar, R.H. (1986). High breakdown point estimates of regression 
by means of the minimization of an efficient scale. Technical Report No.84, Department of Statistics, University of Washington, Seattle, Washington, USA. Available at \url{https://www.stat.washington.edu/research/reports/1986/tr084.pdf}.

\bibitem [Yohai and Zamar(1988)]{tau 88}Yohai, V.J. and Zamar, R.H. (1988). High breakdown point estimates of regression 
by means of the minimization of an efficient scale. \textit{J. Amer. Statis. Assoc.} \textbf{83} 406-413.

\bibitem [Wang et. al.(2007)]{lad-lasso} Wang, H., Li, G., Jiang, G. (2007). Robust regression shrinkage and consistent variable selection through the LAD-Lasso. \textit{J. Bus. Econ. Statis.} \textbf{25} 347-355.

\bibitem [Wang et. al.(2013)]{esl-lasso} Wang, X., Jiang, Y., Huang, M., Zhang,H. (2013). Robust variable selection with exponential squared loss. \textit{J. Amer. Statis. Assoc.} \textbf{108} 632-643.

\bibitem [Zou(2006)]{Adaptive} Zou, H. (2006). The Adaptive Lasso and its Oracle Properties. \textit{J. Amer. Statis. Assoc.} \textbf{101} 1418-1429.



\end{thebibliography}
\end{document}